\newtheorem{theorem}{Theorem}
\newtheorem{fact}[theorem]{Fact}
\newtheorem{lemma}[theorem]{Lemma}
\newtheorem{proposition}[theorem]{Proposition}
\newenvironment{remark}{\rem\rm}{\endrem}
\newcounter{unnumber}
\newtheorem{problem}[unnumber]{Problem}
\newenvironment{proof}{\prf\rm}{\hfill{$\blacksquare$}\endprf}
\newcommand{\Hi}{\mathcal{H}}%
\newcommand{\prox}{\mathrm{prox}}%
\newcommand{\zer}{\mathrm{zer}}
\newcommand{\Gr}{\mathrm{Gr}}
\DeclareMathOperator*\Id{Id}%
\DeclareMathOperator*\argmin{argmin}
\DeclareMathOperator*\fix{Fix}
\title{A Sequential Constraint Method for Solving Variational Inequality over the Intersection of Fixed Point Sets}
\author{Mootta Prangprakhon\thanks{Department of Mathematics, Faculty of Science, Khon Kaen University, Khon Kaen 40002, Thailand,
		email: mootta\_prangprakhon@hotmail.com.}
	\and	
	Nimit Nimana\thanks{Department of Mathematics, Faculty of Science, Khon Kaen University, Khon Kaen 40002, Thailand,
		email: nimitni@kku.ac.th.}%
		\and
	Narin Petrot\thanks{Department of Mathematics, Faculty of Science, Naresuan University,	Phitsanulok 65000, Thailand, Center of Excellence in Nonlinear Analysis and Optimization, Faculty of Science, Naresuan University,
	Phitsanulok	65000,   Thailand, 
		email: narinp@nu.ac.th.}
}
\begin{document}
	\maketitle
	\begin{abstract}
			We consider the variational inequality problem over the intersection of fixed point sets of firmly nonexpansive operators. In order to solve the problem, we present an algorithm and subsequently show the strong convergence of the generated sequence  to the  solution of the considered problem.

		\textbf{Key words:} Firmly Nonexpansive; Fixed point; Hybrid steepest descent method; Variational inequality.
		
	\end{abstract}


\section{Introduction}
\hskip0.6cm

It is well known that many problems  arise in applications of mathematics can be formed as the finding a point that belongs to the nonempty intersection of finitely many closed convex sets, or in general, the fixed point sets of nonlinear operators in a Hilbert space $\Hi$, see for instance \cite{BB96,BC17,C12,CC15}. Namely, let a finite family of nonlinear operators $T_i:\Hi\to\Hi$ with,  the set of all fixed points of the operator $T_i$, $\fix T_i := \{ x \in \Hi\mid T_ix = x\}\neq\emptyset$,  $i=1,2,...,m$, be given, the {\it common fixed point problem} is to find a point $x^*\in \Hi$ such that
\begin{eqnarray}\label{cfp}
x^*\in\bigcap_{i=1}^m\fix T_i,
\end{eqnarray}
provided that  the intersection is nonempty.

According to its fruitful applications, there is a vast literature on solving the common fixed point problem (\ref{cfp}). Notable methods and applications are proposed in \cite{C03,C04,CY15,RFP13,WNC08} when dealing with the certain nonexpansivity of operators $T_i,i=1,2,...,m$. For more approaches on wilder class of operators and many extrapolation variants, the reader  can be found, for example, in \cite{BLT17,CC12,CN19,CRZ18,CG17,FM18,MPP09,T18} and many references therein.

Since the fixed point set of a nonexpansive operator is convex, it is clearly that the intersection of such fixed point sets is also convex. This means that the problem (\ref{cfp}) might have infinitely many solutions, otherwise it has a unique common fixed point. In this case it is customary to inquire that, under some prior criterion, which common fixed point is the best or at least a better common fixed point. A classical strategy is the {\it minimal norm solution problem} of finding a common fixed point in which it  solves the minimization problem
\begin{eqnarray*}%
\begin{array}{ll}
\textrm{minimize }\indent \frac{1}{2}\|x\|^2\\
\textrm{subject to}\indent x\in \bigcap_{i=1}^m\fix T_i,
\end{array}%
\end{eqnarray*}
provided that the problem has a solution. A number of iterative schemes for finding this minimal norm solution have been proposed, see for example, in \cite{CAY12,LKYZ16,TZ17,YX11,ZS13,ZWZ13} and references therein.

Along the line of selecting a specific solution among the common fixed points, and it is well known that  the smooth convex optimization problem can be written as the so-called {\it variational inequality problem}. These observations motivated the solving a variational inequality problem over the common fixed point sets formulated as follows: given a monotone continuous operator $F:\Hi\to \Hi$, find $x^*\in \bigcap\limits_{i = 1}^m \fix{{T_i}}$ such that 
\begin{eqnarray}\label{vip1}\langle F({x^*}),x - {x^*}\rangle  \ge 0 \indent   \forall x\in \bigcap\limits_{i = 1}^m \fix{{T_i}}.
	\end{eqnarray}
Clearly, the minimal norm solution problem is an example of the problem (\ref{vip1}) when   $F(x)$ is the gradient of $\frac{1}{2}\|x\|^2$.

Among popular methods for dealing with this variational inequality problem (\ref{vip1}), we underlines, for instance, the classical work of Lions \cite{L77}, where $T_i, i=1,\ldots,m$ are supposed to be firmly nonexpansive and $F:=Id-a$, for some $a\in\Hi$. After that, the case when $T_i, i=1,\ldots,m$ are  nonexpansive has been studied by Bauschke \cite{B96}. And, the most remarkable method is the so-called {\it hybrid steepest descent method} proposed by Yamada \cite{Y01}, where  $T_i, i=1,\ldots,m$ are supposed to be nonexpansive and the operator $F$ is generally supposed to be strongly monotone and Lipschitz continuous. This starting point inspired many researchers to study in both generalizations of the problem setting and accelerations of this introduced iterative scheme, see \cite{BS11,C15,CZ13,I13,I15,I16,IH14,NNP15,SS17,XK03,YO04} for more insight developments and applications.

In this paper, we deal with the variational inequality problem over the intersection of fixed point sets of firmly nonexpansive operators. We present an iterative scheme for solving the investigated problem. The proposed algorithm can be viewed as a generalization of the well known hybrid steepest descent method in  the allowance of adding appropriated information when computing of operators values. We subsequently give sufficient conditions for the convergence of the proposed method. 

This paper is organized in the following way. We collect some technical definitions and useful facts needed in the paper in Sect. 2. In Sect. 3, we state the problem of consideration, namely the variational inequality over the intersection of fixed point sets, and discuss some remarkable examples, whereas in Sect. 4  the proposed algorithm is introduced and analyzed. Actually, to get on with the proving our main theorem, in Subsect. 4.1 we prove several key tool lemmas, and subsequently establish the strong convergence of the sequence generated by proposed algorithm in Subsect. 4.2. 

\section{Preliminaries}
\hskip0.6cm
Throughout the paper, $\Hi$ is always a real Hilbert space with an inner product $\langle \cdot , \cdot \rangle$ and with the norm $\parallel  \cdot \parallel$. The strong convergence and weak convergence of a sequence $\{ {x_n}\} _{n = 1}^\infty$ to $x \in\Hi$ are indicated as ${x_n} \to x$ and ${x_n}\rightharpoonup x$, respectively. $Id$ denotes the identity operator on $\Hi.$ 

An operator $F:\Hi\to\Hi$ is said to be $\kappa$-{\it Lipschitz continuous}  if there is a real number $\kappa>0$ such that $$\|F(x)-F(y)\|\le\kappa\|x-y\|,$$ for all $x,y\in\Hi$, and $\eta$-{\it strongly monotone} if there is a real number $\eta>0$ such that $$\langle Fx - Fy,x - y\rangle  \ge \eta \|x - y\|^2,$$  for all $x,y\in\Hi$.

	Firstly, in order to prove our convergence result, we need the following proposition. The proof of this result can be found in \cite[Theorem 3.1]{Y01}. 
\begin{proposition}\label{yamada} Suppose that $F:\Hi\to\Hi$ is $\kappa$-Lipschitz continuous and $\eta$-strongly monotone.  If $\mu  \in (0,2\eta /{\kappa ^2})$, then for each $\beta  \in (0,1]$, the mapping ${U^\beta }: = Id -  \mu \beta F$
satiesfies
\[\|{U^\beta }x - {U^\beta }y\| \le (1 - \beta \tau )\|x - y\|,\]
for all $x,y\in\Hi$, where $\tau : = 1 - \sqrt {1 + {\mu ^2}{\kappa ^2} - 2\mu \eta }  \in (0,1].$
\end{proposition}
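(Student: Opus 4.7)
The plan is to carry out the direct squared-norm computation that is standard in this setting. Starting from the definition $U^\beta = Id - \mu\beta F$, I would expand
\[
\|U^\beta x - U^\beta y\|^2 = \|(x-y) - \mu\beta(Fx - Fy)\|^2 = \|x-y\|^2 - 2\mu\beta\langle Fx-Fy, x-y\rangle + \mu^2\beta^2\|Fx-Fy\|^2,
\]
and then apply $\eta$-strong monotonicity to the inner-product term and $\kappa$-Lipschitz continuity to the last norm, yielding
\[
\|U^\beta x - U^\beta y\|^2 \le \bigl(1 - 2\mu\beta\eta + \mu^2\beta^2\kappa^2\bigr)\|x-y\|^2.
\]

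The heart of the argument, and what I expect to be the main obstacle, is to upgrade this quadratic-in-$\beta$ bound to the cleaner factor $(1-\beta\tau)^2$. The key algebraic identity to exploit is that, from the definition $\tau = 1 - \sqrt{1+\mu^2\kappa^2 - 2\mu\eta}$, one has $(1-\tau)^2 = 1 + \mu^2\kappa^2 - 2\mu\eta$, equivalently $\tau^2 - \mu^2\kappa^2 = 2\tau - 2\mu\eta$. Using this, the desired inequality $1 - 2\mu\beta\eta + \mu^2\beta^2\kappa^2 \le (1-\beta\tau)^2 = 1 - 2\beta\tau + \beta^2\tau^2$ rearranges to
\[
(1-\beta)(\tau - \mu\eta) \le 0.
\]
Since $\beta \in (0,1]$, this reduces to verifying $\tau \le \mu\eta$. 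I would establish this by noting that strong monotonicity together with the Cauchy--Schwarz inequality forces $\eta \le \kappa$; then the bound $\tau \le \mu\eta$ follows either trivially (if $1-\mu\eta < 0$) or, in the other case, by squaring $1-\mu\eta \le \sqrt{1 + \mu^2\kappa^2 - 2\mu\eta}$ and reducing to the obvious inequality $\mu^2\eta^2 \le \mu^2\kappa^2$.

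Along the way I would also need to confirm that $\tau \in (0,1]$ is well defined: the radicand $1 + \mu^2\kappa^2 - 2\mu\eta = (1-\mu\eta)^2 + \mu^2(\kappa^2 - \eta^2)$ is nonnegative thanks to $\eta \le \kappa$, and the hypothesis $\mu \in (0, 2\eta/\kappa^2)$ is precisely what ensures $\mu^2\kappa^2 < 2\mu\eta$, i.e. the radicand is strictly less than $1$, so $\tau > 0$. Once both the algebraic inequality and the admissibility of $\tau$ are in hand, taking square roots in the estimate above yields the claimed nonexpansivity constant $1-\beta\tau$, completing the argument.
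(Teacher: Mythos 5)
Your proof is correct and complete. Note that the paper itself gives no proof of this proposition: it simply defers to \cite[Theorem 3.1]{Y01}, so there is no in-paper argument to compare against. Your computation is the standard one from that reference: expand $\|U^\beta x-U^\beta y\|^2$, apply strong monotonicity and Lipschitz continuity to get the factor $1-2\mu\beta\eta+\mu^2\beta^2\kappa^2$, and then dominate it by $(1-\beta\tau)^2$. The one place where you deviate slightly from Yamada's usual presentation is the last step: he typically uses $\beta^2\le\beta$ together with the concavity bound $\sqrt{1-\beta c}\le 1-\beta(1-\sqrt{1-c})$ for $c:=2\mu\eta-\mu^2\kappa^2\in(0,1]$, whereas you verify the equivalent algebraic inequality directly by reducing it, via the identity $\tau^2-\mu^2\kappa^2=2(\tau-\mu\eta)$, to $(1-\beta)(\tau-\mu\eta)\le 0$ and then checking $\tau\le\mu\eta$ from $\eta\le\kappa$. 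Both routes are sound; your checks that the radicand is nonnegative (so $\tau\le 1$) and strictly less than $1$ (so $\tau>0$, using $\mu<2\eta/\kappa^2$) are exactly the admissibility facts the statement requires.
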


Next,  we recall some noticeable operators. An operator $T:\Hi\to\Hi$ is said to be $\rho$-{\it strongly quasi-nonexpansive} (SQNE), where $\rho\ge0$, if $\fix T\ne\emptyset$ and $$\|Tx - z\|^2 \le \|x - z\|^2 - \rho \|Tx - x\|^2,$$ for all $x\in\Hi$ and $z\in\fix T$. If $\rho>0$, we say that $T$ is strongly quasi-nonexpansive. If $\rho=0$, then $T$ is said to be {\it quasi-nonexpansive}, that is $$\|Tx - z\| \le \|x - z\|,$$ for all $x\in\Hi$ and $z\in\fix T$. An operator $T:\Hi\to\Hi$ is said to be {\it nonexpansive}, if $T$ is 1-Lipschitz continuous, that is
$$\|Tx-Ty\|\leq\|x-y\|,$$
for all $x,y\in \Hi$. It is clearly that a nonexpansive with nonempty fixed point set is  quasi-nonexpansive. A mapping $T:\Hi\to\Hi$ is said to be a {\it cutter} if $\fix T\ne\emptyset$ and $$\langle x - Tx,z - Tx\rangle  \le 0,$$ for all $x\in\Hi$ and all $z\in\fix T$. Furthermore, an operator $T:\Hi\to\Hi$ is said to be {\it firmly nonexpansive} (FNE), if  $$\langle Tx - Ty,x - y\rangle  \ge \|Tx - Ty\|^2,$$
for all $x,y\in \Hi$.

\vskip0.5cm
Some important properties applied in the further part of this paper are stated as the following facts which can be found in \cite[Chapter 2]{C12}.
\begin{fact}\label{fact1*}
Let $T:\Hi\to\Hi$ be a  firmly nonexpansive operator. Then $T$ is nonexpansive and it is a cutter, and hence quasi-nonexpansive.
\end{fact}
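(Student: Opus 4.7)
The plan is to verify each of the three claimed properties directly from the firm nonexpansiveness inequality $\langle Tx-Ty, x-y\rangle \ge \|Tx-Ty\|^2$, working in the order they are listed, since each subsequent property will essentially fall out of the previous one.

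First, for nonexpansiveness, I would combine the firm nonexpansiveness inequality with the Cauchy--Schwarz inequality. Concretely, for arbitrary $x,y \in \Hi$ one has
\[
\|Tx - Ty\|^2 \;\le\; \langle Tx - Ty, x - y\rangle \;\le\; \|Tx - Ty\|\,\|x - y\|.
\]
If $Tx = Ty$ the conclusion is trivial; otherwise dividing by $\|Tx-Ty\|>0$ yields $\|Tx - Ty\| \le \|x - y\|$, which is 1-Lipschitz continuity.

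Next, for the cutter property, fix $x \in \Hi$ and any $z \in \fix T$, and apply firm nonexpansiveness with $y := z$, using $Tz = z$. This gives $\langle Tx - z, x - z\rangle \ge \|Tx - z\|^2$. Rewriting the right-hand side as $\langle Tx - z, Tx - z\rangle$ and subtracting yields
\[
\langle Tx - z,\; (x-z) - (Tx - z)\rangle \;=\; \langle Tx - z,\; x - Tx\rangle \;\ge\; 0.
\]
A single sign flip converts this to $\langle x - Tx,\, z - Tx\rangle \le 0$, which is exactly the cutter inequality.

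Finally, quasi-nonexpansiveness is an immediate consequence of either of the two previous properties. The cleanest route is to specialize the already-proved nonexpansiveness by taking $y = z \in \fix T$, in which case $Ty = z$ and the inequality $\|Tx - Ty\| \le \|x-y\|$ becomes $\|Tx - z\| \le \|x - z\|$. Alternatively, expanding $\|x-z\|^2 = \|x - Tx\|^2 + 2\langle x-Tx, Tx - z\rangle + \|Tx - z\|^2$ and using the cutter inequality to discard the (nonnegative) cross term gives the same bound. There is no real obstacle anywhere in this argument; the only mild point of care is the sign bookkeeping that turns $\langle Tx - z, x - Tx\rangle \ge 0$ into the form of the cutter inequality used in the paper.
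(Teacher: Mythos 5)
Your proof is correct. The paper does not actually prove this fact itself --- it only cites \cite[Chapter 2]{C12} --- and the argument you give (Cauchy--Schwarz on the firm nonexpansiveness inequality for the Lipschitz bound, the specialization $y=z\in\fix T$ plus a sign flip for the cutter property, and either route to quasi-nonexpansiveness) is exactly the standard one found in that reference, so nothing further is needed.
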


\begin{fact}\label{fact1}
If $T:\Hi\to\Hi$ is  quasi-nonexpansive, then $\fix T$ is closed and convex.
\end{fact}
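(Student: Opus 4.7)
My plan is to prove closedness and convexity separately, relying only on the defining inequality of quasi-nonexpansivity together with the fact that $\fix T$ is nonempty. Note that we cannot use continuity of $T$ directly (quasi-nonexpansive operators need not be continuous), so both arguments must be carried out using the inequality $\|Tx-z\|\le\|x-z\|$ for $x\in\Hi$ and $z\in\fix T$.

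For closedness, I would take a sequence $\{z_n\}\subset\fix T$ with $z_n\to z\in\Hi$ and show $z\in\fix T$. The trick is to apply the quasi-nonexpansive inequality with $x=z$ and the fixed point $z_n$, which gives $\|Tz-z_n\|\le\|z-z_n\|$. A triangle inequality then yields
\[
\|Tz-z\|\le\|Tz-z_n\|+\|z_n-z\|\le 2\|z-z_n\|\longrightarrow 0,
\]
forcing $Tz=z$. This is the whole argument for closedness.

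For convexity, I would take $x,y\in\fix T$ and $\lambda\in(0,1)$, set $z:=\lambda x+(1-\lambda)y$, and try to show $Tz=z$. Applying quasi-nonexpansivity twice (with the fixed points $x$ and $y$) gives $\|Tz-x\|^2\le\|z-x\|^2$ and $\|Tz-y\|^2\le\|z-y\|^2$. The key identity is the standard convex-combination rule for norms,
\[
\lambda\|w-x\|^2+(1-\lambda)\|w-y\|^2=\|w-z\|^2+\lambda(1-\lambda)\|x-y\|^2,
\]
valid for every $w\in\Hi$. Applying this with $w=Tz$ on the left-hand side and with $w=z$ on the right-hand side (in which case the term $\|w-z\|^2$ vanishes), the inequality collapses to $\|Tz-z\|^2\le 0$, giving $Tz=z$.

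Neither step has a genuine obstacle; the only thing to be a little careful about is that closedness is not an immediate topological consequence (since we have no continuity of $T$) but must be extracted from the quasi-nonexpansive estimate itself. The convexity step is really just the standard parallelogram-type identity applied to a convex combination, so the whole proof is short once one has the right inequality to start from.
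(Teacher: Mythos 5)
Your proof is correct. Note that the paper itself gives no argument for this fact; it is quoted from \cite[Chapter 2]{C12}, and the standard proof there (and in \cite[Proposition 4.23]{BC17}) is structurally different from yours: one observes that $z\in\fix T$ if and only if $\|Tx-z\|\le\|x-z\|$ for \emph{all} $x\in\Hi$ (the ``only if'' is the definition; the ``if'' follows by taking $x=z$), so that
\[
\fix T=\bigcap_{x\in\Hi}\bigl\{z\in\Hi:\ 2\langle x-Tx,z\rangle\le\|x\|^{2}-\|Tx\|^{2}\bigr\}
\]
is an intersection of closed half-spaces (or copies of $\Hi$ when $Tx=x$), hence closed and convex in one stroke. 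Your route instead verifies the two properties directly: the closedness argument via $\|Tz-z\|\le 2\|z-z_n\|$ is sound and correctly avoids any appeal to continuity of $T$, and the convexity argument is exactly the identity of Proposition \ref{convex} of the paper applied at $w=Tz$ and $w=z$, yielding $\|Tz-z\|^{2}\le 0$. Both approaches use only the defining inequality and the nonemptiness of $\fix T$; the half-space representation is slicker and gives both conclusions simultaneously, while yours is more elementary and makes visible exactly where the quadratic identity enters. No gaps.
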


\begin{fact}\label{fact2}
Let $T:\Hi\to\Hi$ be an operator. The following properties are equivalent:
\begin{itemize}
\item[(i)] $T$ is a cutter.
\item[(ii)] $\langle Tx - x,z - x\rangle  \ge \|Tx - x\|^2$ for every $x\in\Hi$ and $z\in\fix T$.
\item[(iii)] $T$ is 1-strongly quasi-nonexpansive.
\end{itemize}
\end{fact}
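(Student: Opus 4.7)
My plan is to establish the two equivalences (i) $\Leftrightarrow$ (ii) and (ii) $\Leftrightarrow$ (iii) separately, since each reduces to a short algebraic manipulation in the Hilbert space. In both directions I will use only the polarization-type decompositions of the vectors $z-Tx$, $z-x$, $Tx-x$, together with the squared-norm identity, so no deep analysis is needed; the main obstacle will simply be bookkeeping signs correctly.

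For (i) $\Leftrightarrow$ (ii), I would fix $x\in\Hi$ and $z\in\fix T$, and use the decomposition
\[
z-Tx=(z-x)+(x-Tx).
\]
Taking the inner product with $Tx-x$, the cutter inequality $\langle x-Tx,z-Tx\rangle\le 0$ (equivalently $\langle Tx-x,z-Tx\rangle\ge 0$) becomes
\[
\langle Tx-x,z-x\rangle-\|Tx-x\|^2\ge 0,
\]
which is exactly (ii). The reverse direction uses the symmetric decomposition $z-x=(z-Tx)+(Tx-x)$ to invert the manipulation and recover the cutter inequality from (ii).

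For (ii) $\Leftrightarrow$ (iii), I would expand
\[
\|Tx-z\|^2=\|(Tx-x)+(x-z)\|^2=\|Tx-x\|^2+2\langle Tx-x,x-z\rangle+\|x-z\|^2,
\]
and rearrange so that the 1-SQNE inequality
\[
\|Tx-z\|^2\le\|x-z\|^2-\|Tx-x\|^2
\]
becomes equivalent to $2\langle Tx-x,z-x\rangle\ge 2\|Tx-x\|^2$, i.e.\ precisely (ii). Since every step is an algebraic equivalence, this direction actually gives the biconditional at once.

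Combining the two biconditionals yields (i) $\Leftrightarrow$ (ii) $\Leftrightarrow$ (iii). There is no genuinely hard step here; the only thing to watch is that each rearrangement preserves the direction of the inequality, which is guaranteed because the auxiliary terms I introduce (namely $\|Tx-x\|^2$) appear with the correct sign in both the cutter identity and the polarization identity.
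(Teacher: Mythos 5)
Your argument is correct: both equivalences follow exactly as you describe, from the decomposition $z-Tx=(z-x)+(x-Tx)$ for (i)$\Leftrightarrow$(ii) and from expanding $\|Tx-z\|^2=\|(Tx-x)+(x-z)\|^2$ for (ii)$\Leftrightarrow$(iii), and every step is a reversible rearrangement. The paper itself gives no proof of this fact --- it is quoted from Cegielski's book \cite[Chapter 2]{C12} --- and your computation is precisely the standard argument given there, so there is nothing to compare beyond noting that the three conditions implicitly presuppose $\fix T\neq\emptyset$ (as built into the definitions of cutter and $1$-SQNE), which your proof tacitly and harmlessly assumes.
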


Below, we present further properties of a composition of quasi-nonexpansive operators.

\begin{fact}\label{fact3}
Let ${T_i}:\Hi \to \Hi,  i=1,2,...,m$, be  quasi-nonexpansive with $\bigcap\limits_{i = 1}^m {\fix{T_i}}  \ne \emptyset$. Then a composition $T: = {T_m}{T_{m - 1}}\cdots{T_1}$ is also quasi-nonexpansive and has the  property:
$$\fix(T) = \fix({T_m}{T_{m - 1}}\cdots{T_1}) = \bigcap\limits_{i = 1}^m {\fix{T_i}} .$$
\end{fact}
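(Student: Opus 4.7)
The plan is to split Fact 3 into three claims and dispatch them in order: first the easy inclusion $\bigcap_{i=1}^m \fix T_i \subseteq \fix T$, then quasi-nonexpansivity of $T$, and finally the reverse inclusion $\fix T \subseteq \bigcap_{i=1}^m \fix T_i$.

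The first two parts I would handle together via one telescoping argument. Picking $z \in \bigcap_{i=1}^m \fix T_i$ and applying $T_1, T_2, \ldots, T_m$ to $z$ in succession gives $Tz = z$, so $\bigcap_{i=1}^m \fix T_i \subseteq \fix T$ and in particular $\fix T \neq \emptyset$, which we need in order to even speak of quasi-nonexpansivity of $T$. Next, for any $x \in \Hi$, introduce the intermediates $x_0 := x$ and $x_i := T_i x_{i-1}$ for $i=1,\ldots,m$. The quasi-nonexpansive inequality of each $T_i$ with the common fixed point $z$ yields $\|x_i - z\| \le \|x_{i-1} - z\|$, and telescoping these gives
\[
\|Tx - z\| = \|x_m - z\| \le \|x_0 - z\| = \|x - z\|,
\]
so $T$ is quasi-nonexpansive.

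The hard part is the reverse inclusion. Given $x \in \fix T$ and any $z \in \bigcap_{i=1}^m \fix T_i$, building the same $x_i$'s and using $x_m = Tx = x$ collapses the above chain of inequalities to equalities $\|x_i - z\| = \|x_{i-1} - z\|$ for every $i$. The main obstacle is to upgrade these norm equalities to the much stronger statement $x_{i-1} \in \fix T_i$, which does not follow from bare quasi-nonexpansivity (one can check that for two common fixed points $z_1,z_2$, the equalities only force $x_i - x_{i-1}$ to be orthogonal to $z_1 - z_2$). To close this gap I would appeal to the sharper structure available in the paper's setting: by Fact~\ref{fact1*} a firmly nonexpansive operator is a cutter, and by Fact~\ref{fact2}(iii) a cutter is $1$-strongly quasi-nonexpansive, so one actually has the refined estimate
\[
\|x_i - z\|^2 \le \|x_{i-1} - z\|^2 - \|x_i - x_{i-1}\|^2.
\]
Summing over $i=1,\ldots,m$ and using $x_m = x_0$ gives $\sum_{i=1}^m \|x_i - x_{i-1}\|^2 \le 0$, forcing $x_i = x_{i-1}$ for every $i$; a backward induction from $x_0 = x$ then places $x$ in every $\fix T_i$, completing the proof.
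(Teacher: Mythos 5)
Your proof is correct, and you should know that the paper itself supplies no argument for Fact~\ref{fact3}: it is quoted from Cegielski's monograph, where the composition result is established for \emph{strongly} quasi-nonexpansive operators, not for merely quasi-nonexpansive ones. The most valuable part of your proposal is precisely your observation that the reverse inclusion $\fix T\subseteq\bigcap_{i=1}^m\fix T_i$ genuinely fails under bare quasi-nonexpansivity, so the statement as literally written admits no proof: in $\R^2$ take $T_1$ to be the rotation by an angle $\theta\in(0,\pi)$ about the origin and $T_2$ the rotation by $-\theta$; both are isometries with $\fix T_1=\fix T_2=\{0\}$, hence quasi-nonexpansive, yet $T_2T_1=Id$ has fixed point set all of $\R^2$. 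Your repair --- invoking Facts~\ref{fact1*} and~\ref{fact2} to obtain the $1$-strongly quasi-nonexpansive estimate $\|x_i-z\|^2\le\|x_{i-1}-z\|^2-\|x_i-x_{i-1}\|^2$, telescoping, and using $x_m=x_0$ to force $\sum_{i=1}^m\|x_i-x_{i-1}\|^2\le 0$ --- is exactly the standard argument and matches the hypotheses under which the fact is actually applied in this paper, where the $T_i$ are firmly nonexpansive and hence cutters. The easy inclusion and the telescoped quasi-nonexpansivity of $T$ are also fine. Two small quibbles: your final induction runs \emph{forward} from $x_0=x$, not backward; and quasi-nonexpansivity of $T$ requires the inequality for every $z\in\fix T$, whereas your telescoping only delivers it for $z\in\bigcap_{i=1}^m\fix T_i$, so that claim is complete only after the set identity is proved --- a matter of ordering, not of substance.
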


An operator $T:\Hi \to \Hi$ is said to be satisfied the {\it demi-closedness} (DC) principle if $T-Id$ is demi-closed at $0$, i.e., for any weakly converging sequence ${\{ {x^n}\} _{n = 1}^\infty}$ such that $x^n\rightharpoonup y\in\Hi$ as $n\to\infty$ with $\|Tx^n-x^n\| \to 0$ as $n\to\infty$, we have $y\in\fix T.$

\vskip0.5cm
The following fact is well known and can be found in \cite[Corollary 4.28]{BC17}.

\begin{fact} If $T:\Hi\to\Hi$ is a nonexpansive operator with $\fix T\neq\emptyset$, then the operator $T-Id$ is demi-closed at $0$. 
	\end{fact}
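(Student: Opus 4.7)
The plan is to prove the demi-closedness principle directly, using the Opial property of Hilbert spaces together with the nonexpansivity of $T$. Suppose $\{x^n\}_{n=1}^\infty \subset \Hi$ satisfies $x^n \rightharpoonup y$ and $\|Tx^n - x^n\| \to 0$; I must show that $y \in \fix T$, i.e., $Ty = y$. The hypothesis $\fix T \neq \emptyset$ is used only to ensure the statement is non-vacuous (so that ``demi-closed at $0$'' has content), not in the mechanics of the argument.

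First I would record an auxiliary inequality that plays the role of Opial's lemma in the Hilbert setting: for any $z \in \Hi$ with $z \neq y$, one has $\liminf_{n\to\infty} \|x^n - y\| < \liminf_{n\to\infty} \|x^n - z\|$. In Hilbert space this is immediate from the expansion
\[
\|x^n - z\|^2 = \|x^n - y\|^2 + 2\langle x^n - y, y-z\rangle + \|y - z\|^2,
\]
since $x^n \rightharpoonup y$ forces $\langle x^n - y, y - z\rangle \to 0$ while $\|y-z\|^2 > 0$.

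Next I would argue by contradiction: assume $Ty \neq y$ and apply the inequality above with $z = Ty$ to obtain $\liminf_n \|x^n - y\| < \liminf_n \|x^n - Ty\|$. On the other hand, the triangle inequality together with the nonexpansivity of $T$ yields
\[
\|x^n - Ty\| \le \|x^n - Tx^n\| + \|Tx^n - Ty\| \le \|x^n - Tx^n\| + \|x^n - y\|.
\]
Since $\|x^n - Tx^n\| \to 0$, taking $\liminf$ on both sides gives $\liminf_n \|x^n - Ty\| \le \liminf_n \|x^n - y\|$, contradicting the strict inequality. Hence $Ty = y$, so $(T - \Id)y = 0$, which is precisely demi-closedness of $T - \Id$ at $0$.

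This proof is classical and routine; I do not expect any real obstacle. The only delicate point is invoking the Opial-type strict inequality, which I would handle in-line via the parallelogram expansion above rather than appealing to an external reference, so that the proof is self-contained with respect to the preliminaries already gathered in the paper.
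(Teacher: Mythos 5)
Your proof is correct. Note that the paper itself does not prove this fact at all: it simply cites \cite[Corollary 4.28]{BC17}, so there is no in-paper argument to compare against. Your Opial-type argument is the classical proof of Browder's demiclosedness principle, and every step checks out: the strict inequality $\liminf_n\|x^n-y\|<\liminf_n\|x^n-z\|$ for $z\neq y$ follows from the expansion you write because weak convergence kills the cross term and weakly convergent sequences are bounded (so the liminfs are finite, which is the one small point worth saying explicitly before adding $\|y-z\|^2>0$ to both sides); the contradiction with $z=Ty$ then uses only the triangle inequality and nonexpansivity. Your remark that $\fix T\neq\emptyset$ is not actually needed in the mechanics is also accurate --- the argument shows directly that any weak cluster point of such a sequence is a fixed point. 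The reference proof in Bauschke--Combettes runs a slightly different direct computation (expanding $\|x^n-Ty\|^2$ via the firm-nonexpansiveness-style identity rather than invoking Opial), but the two are equivalent in substance and length; yours has the advantage of being self-contained given only the inner-product expansion.
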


In order to prove the convergence result, we need the following proposition which can be found in \cite[Corollary 2.15]{BC17}. 
\begin{proposition}\label{convex}
The following equality holds for all $x,y\in\Hi$ and $\lambda\in\mathbb{R}$:
\[\|\lambda x + (1 - \lambda )y{\|^2} = \lambda\|x{\|^2} + (1 - \lambda )\|y{\|^2} - \lambda (1 - \lambda )\|x - y{\|^2}.\]
\end{proposition}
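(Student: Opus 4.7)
The plan is to prove the identity by a direct expansion of both sides using the bilinearity of the inner product $\langle\cdot,\cdot\rangle$, since both sides are just quadratic polynomials in $x$ and $y$ and the identity holds pointwise for every $\lambda\in\mathbb{R}$.

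First, I would expand the left-hand side by writing $\|\lambda x + (1-\lambda)y\|^2 = \langle \lambda x + (1-\lambda)y, \lambda x + (1-\lambda)y\rangle$ and distributing to obtain
\[
\|\lambda x + (1-\lambda)y\|^2 = \lambda^{2}\|x\|^{2} + 2\lambda(1-\lambda)\langle x,y\rangle + (1-\lambda)^{2}\|y\|^{2}.
\]
Next, I would expand the right-hand side by using $\|x-y\|^2 = \|x\|^2 - 2\langle x,y\rangle + \|y\|^2$ inside the term $-\lambda(1-\lambda)\|x-y\|^2$, group the coefficients of $\|x\|^2$ and $\|y\|^2$, and observe that
\[
\lambda - \lambda(1-\lambda) = \lambda^{2}, \qquad (1-\lambda) - \lambda(1-\lambda) = (1-\lambda)^{2}.
\]
This immediately shows that the right-hand side also equals $\lambda^{2}\|x\|^{2} + 2\lambda(1-\lambda)\langle x,y\rangle + (1-\lambda)^{2}\|y\|^{2}$, matching the left-hand side.

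Since the proof reduces to a routine algebraic identity in an inner product space, I do not anticipate any genuine obstacle; the only thing to be careful about is handling the sign of $\lambda(1-\lambda)\|x-y\|^2$ correctly during the bookkeeping of coefficients.
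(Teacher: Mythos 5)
Your proof is correct: the direct expansion of both sides via bilinearity of the inner product, together with the coefficient identities $\lambda-\lambda(1-\lambda)=\lambda^2$ and $(1-\lambda)-\lambda(1-\lambda)=(1-\lambda)^2$, is exactly the standard argument. The paper itself gives no proof---it simply cites \cite[Corollary 2.15]{BC17}, where the identity is established by the same computation you carry out---so there is nothing to compare beyond noting that your argument is the expected one.
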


We close this section by presenting a special case of \cite[Proposition 4.6]{CZ14} which plays an important role in proving our convergence result. 
\begin{proposition}\label{prop-cegie}
Let ${T_i}:\Hi \to \Hi,  i=1,2,...,m$, be cutter operators with $\bigcap\limits_{i = 1}^m {\fix{T_i}}  \ne \emptyset$. Denote the compositions $T: = {T_m}{T_{m - 1}}\cdots{T_1}$, and $S_i:=T_iT_{i-1}\cdots T_1$, where $S_0:=Id$. Then, for any $x\in\Hi$ and $z\in\bigcap\limits_{i = 1}^m {\fix{T_i}}$, it holds that
\begin{equation}\label{prop-cegie-2}
\frac{1}{{2L}}\sum\limits_{i = 1}^m  \|{S_i}x - {S_{i - 1}}x{\|^2} \le \|Tx - x\|,
\end{equation}
for any $L\ge\|x-z\|.$
\end{proposition}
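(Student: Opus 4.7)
The plan is to combine the cutter hypothesis with a telescoping argument and a single application of the reverse triangle inequality. Because each $T_i$ is a cutter with $z \in \fix T_i$, Fact \ref{fact2}(iii) tells us that every $T_i$ is $1$-strongly quasi-nonexpansive, i.e.,
\[
\|T_i y - z\|^2 \le \|y - z\|^2 - \|T_i y - y\|^2 \quad \text{for all } y \in \Hi.
\]
Applying this with $y = S_{i-1}x$, so that $T_i y = S_i x$, yields the key per-step inequality
\[
\|S_i x - z\|^2 \le \|S_{i-1} x - z\|^2 - \|S_i x - S_{i-1} x\|^2.
\]

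Next I would sum these inequalities from $i=1$ to $m$. The terms $\|S_i x - z\|^2$ telescope, and since $S_0 = Id$ and $S_m = T$, one obtains
\[
\sum_{i=1}^m \|S_i x - S_{i-1} x\|^2 \le \|x - z\|^2 - \|Tx - z\|^2.
\]
It then remains to bound the right-hand side by $2L\|Tx - x\|$. Factoring as a difference of squares,
\[
\|x - z\|^2 - \|Tx - z\|^2 = \bigl(\|x - z\| + \|Tx - z\|\bigr)\bigl(\|x - z\| - \|Tx - z\|\bigr).
\]
The same telescoping bound already implies $\|Tx - z\| \le \|x - z\| \le L$, so the first factor is at most $2L$. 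The reverse triangle inequality gives $\|x - z\| - \|Tx - z\| \le \|(x - z) - (Tx - z)\| = \|Tx - x\|$. Multiplying these two estimates and dividing by $2L$ yields the claim.

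The main obstacle, and it is a mild one, is the passage from the quadratic quantity $\|x - z\|^2 - \|Tx - z\|^2$ to the linear quantity $\|Tx - x\|$. The essential move is to split the difference of squares into a sum and a difference of norms, using the hypothesis $L \ge \|x - z\|$ to control the sum factor and the reverse triangle inequality to control the difference factor; without the assumption on $L$ the prefactor $2L$ could not be produced.
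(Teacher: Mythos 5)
Your proof is correct: the cutter hypothesis gives $1$-strong quasi-nonexpansiveness of each $T_i$ via Fact \ref{fact2}(iii), the per-step inequalities telescope to $\sum_{i=1}^m \|S_i x - S_{i-1}x\|^2 \le \|x-z\|^2 - \|Tx-z\|^2$, and the factorization into $\bigl(\|x-z\|+\|Tx-z\|\bigr)\bigl(\|x-z\|-\|Tx-z\|\bigr)$ combined with $\|Tx-z\|\le\|x-z\|\le L$ and the reverse triangle inequality yields exactly the stated bound. The paper itself offers no proof, deferring to \cite[Proposition 4.6]{CZ14}, and your argument is the standard one underlying that reference, so you have in effect supplied the missing self-contained justification.
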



\section{Problem Statement}
\hskip0.6cm
In this section, we state  our main problem as follows:

\begin{problem}[VIP]\label{problem-VIP} Assume that
	\begin{itemize} \item[(i)] $T_i:\Hi\to\Hi$,  $i=1,2,...,m$, are firmly nonexpansive  with $\bigcap\limits_{i = 1}^m \fix{{T_i}}\ne \emptyset$.
		\item [(ii)] $F:\Hi\to\Hi$ is $\eta$-strongly monotone and $\kappa$-Lipschitz continuous with $\kappa  \ge \eta  > 0$.
	\end{itemize}The problem is to find a point $x^*\in \bigcap\limits_{i = 1}^m \fix{{T_i}}$ such that 
	\[\langle F({x^*}),x - {x^*}\rangle  \ge 0 \indent\text{    for all $x\in \bigcap\limits_{i = 1}^m \fix{{T_i}}$.   }\]
\end{problem}

\begin{remark}
By the assumptions (i) and (ii), we know from \cite[Theorem 2.3.3]{FP03} that Problem {\bf(VIP)}  has the unique solution. 
\end{remark}

Problem {\bf (VIP)} also lies in the models of the suitably selected choice among common point problems as the following few examples.

Now, let $B:\Hi\to 2^\Hi$ be a set-valued operator. The monotone inclusion problem is to find a point $x^*\in \Hi$ such that
$$0\in B(x^*),$$
provided it exists. Actually, we denote by $\Gr(B) :=\{(x,u)\in \Hi\times \Hi:u\in Bx\}$ its graph, and $\zer(B):=\{z\in \Hi:0\in B(z)\}$ the set of all zeros of the operator $B$. The set-valued operator $B$ is said to be \textit{monotone} if 
$\langle x-y,u-v\rangle\geq0,$ for all $(x,u), (y,v)\in \Gr(B)$, and it is called \textit{maximally monotone} if its graph is not properly contained in the graph of any other monotone operators. For a set-valued operator $B:\Hi \rightrightarrows \Hi$, we define the {\it resolvent} of $B$, $J_{B}:\Hi\rightrightarrows \Hi$, by
$$J_{B}:=(\Id+B)^{-1}.$$ Note that if $B$ is maximally monotone and $r>0$, then the resolvent $ J_{rB}$ of $rB$ is (single-valued) FNE with $$\fix J_{rB}=\zer(B),$$ see \cite[Proposition 23.8, Proposition 23.38]{BC17}. 
  
  Thus, for given $r>0$, and a finitely many maximally monotone operators $B_i:\Hi\rightrightarrows\Hi, i=1,2,...,m$, we put $T_i:=J_{rB_i}, i=1,2,...,m$, Problem {\bf (VIP)} is nothing else than, in particular, the problem of finding a point $x^*\in \bigcap\limits_{i = 1}^m \zer(B_i)$ such that 
  $$\langle F({x^*}),x - {x^*}\rangle  \ge 0 \indent\forall x\in \bigcap_{i=1}^m\zer(B_i).$$
 Some interesting iterative methods for solving this type of problem and its particular situations are investigated in \cite{BCM19,I11,X10}.

Moreover, recalling that for given $r>0$ and a proper convex lower semicontinuous function $f:\Hi\to(-\infty,+\infty]$, we denote by $\prox_{rf}(x)$ the {\it proximal point} of parameter $r$ of   $f$ at $x$, which is the unique optimal solution of the optimization problem
$$\min\left\{ f(u)+\frac{1}{2r}\|u-x\|^2: u\in \Hi\right\}.$$
It is known that $\prox_{rf}=J_{r\partial f}$ (see \cite[Example 23.3]{BC17}) which is FNE and $\fix \prox_{r\varphi}=\argmin f:=\{x\in\Hi:f(x)\leq f(u),\forall u\in\Hi\}$. Thus, for a finitely many proper convex lower semicontinuous functions $f:\Hi\to(-\infty,+\infty], i=1,2,...,m$ and putting $T_i:=\prox_{rf_i}$, 
Problem {\bf (VIP)} is reduced to the problem of finding a point $x^*\in \bigcap\limits_{i = 1}^m \argmin f_i$ such that 
$$\langle F({x^*}),x - {x^*}\rangle  \ge 0 \indent\forall x\in \bigcap_{i=1}^m\argmin f_i,$$
see \cite{CNP16,SS17} for more details about this problem. In these cases, Algorithm \ref{algorithm} and Theorem \ref{main-thm} below are also  applicable for these two problems.

\section{Algorithm and its Convergence Analysis}
\hskip0.6cm
In this section, we will propose  an algorithm for solving Problem {\bf (VIP)} and subsequently analyzes their convergence properties under some certain conditions.
\vskip0.5cm

Firstly, we are now present an iterative method for solving Problem {\bf (VIP)} as follows:

\vskip0.5cm
\begin{algorithm}[H]\label{algorithm}
	\SetAlgoLined
	\textbf{Initialization}: The positive real sequences $\{\lambda_n\}_{n=1}^\infty$, $\{\beta_n\}_{n=1}^\infty$, and positive real number $\mu$. Take an arbitrary $x^1\in \Hi$. \\
	\textbf{Iterative Step}: For a given current iterate $x^n\in \Hi$ ($n\geq 1$), set 
	$$\varphi_0^n:=x^n - \mu\beta _{n }F(x^n).$$
	Define
	$$\varphi _i^n: = {T_i}\varphi _{i - 1}^n +{e_i^n}, \hspace{1cm} i=1,\ldots,m,$$
	where $e_i^n\in\Hi$ is added information when computing  $T_i\varphi _{i - 1}^n$'s value. Compute
	$${x^{n + 1}} :=(1- {\lambda _n}) {\varphi _0^n} +\lambda_n\varphi _m^n.$$ 
	Update $n=n+1$.	
	\caption{Sequential Constraint Method (in short, SCM)}
\end{algorithm}

\vskip0.5cm
\begin{remark} 
	\begin{itemize}
		\item[(i)] 
		It is important to point out that  the term  $e_i^n, i=1\ldots,m$, can  be viewed as added information when computing the operator $T$'s values, for instance, a feasible like direction. Actually, in constrained optimization problem, we call a vector $d$ a feasible direction at the current iterate $x_k$  if the estimate $x_k+d$ belongs to the constrained set. Notice that, in our situation, we  can not ensure that each estimate  ${T_i}\varphi _{i - 1}^n$ belongs to the fixed point set $\fix T_i$. Thus, adding an appropriated term ${e_i^n}$ may make the estimate $\varphi _i^n$ closes $\fix T_i$ so that the convergence may be improved. 
		\item[(ii)] Apart from (i),  the presence of  added information $e_i^n, i=1\ldots,m$, can be viewed as the allowance of possible numerical errors on the computations of $T_i$'s operator value. This situation may occur when the explicit form of $T_i$ is not known, or even when $T_i$'s operator value can be found approximately by solving a subproblem, for instance a metric projection onto a nonempty closed convex set, a proximity operator of a proper convex and lower semicontinuous function, or even the resolvent operator of a maximally monotone operator.
		
		
	\end{itemize}
\end{remark}

\vskip0.5cm
The main theorem of this section  is as follows:

\begin{theorem}\label{main-thm} Suppose that $\mu  \in (0,2\eta /{\kappa ^2})$, $\{ \beta _{n}\}_{n = 1}^\infty\subset(0,1]$ satisfy 
	$\mathop {\lim }\limits_{n \to \infty }{\beta _n} = 0$        and    $\sum\limits_{n = 1}^\infty  {{\beta _n}}  =  + \infty$, and $\{ \lambda _{n}\}_{n = 1}^\infty \subset [\varepsilon,1-\varepsilon]$ for some constant $\varepsilon\in(0,1/2]$. If  $\sum\limits_{n = 1}^\infty  \|e_i^n\| <  + \infty$ for each $i=1,2,...,m$, then the a sequence $\{ {x^n}\} _{n = 1}^\infty$  generated by Algorithm \ref{algorithm} converges strongly to the unique solution to Problem {\bf (VIP)}.
\end{theorem}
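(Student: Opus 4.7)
The plan is to follow the template used to prove convergence of Yamada's hybrid steepest descent method, suitably modified to handle both the sequential composition of cutter operators $T_1,\ldots,T_m$ and the summable errors $e_i^n$. Denote by $x^*$ the unique solution of (VIP) (existence by the Remark). Set $T:=T_m T_{m-1}\cdots T_1$ and $S_i:=T_i\cdots T_1$ with $S_0:=\Id$, so that $\varphi_i^n=S_i(\varphi_0^n)+\sum_{j\le i}(\text{composed error terms involving }e_j^n)$. By Facts \ref{fact1*}--\ref{fact3} and Fact \ref{fact2}(iii), each $T_i$ is 1-SQNE (a cutter) and $T$ is quasi-nonexpansive with $\fix T=\bigcap_i \fix T_i$. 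Write $y^n:=\varphi_0^n=U^{\beta_n}x^n$ where $U^{\beta_n}=\Id-\mu\beta_n F$; then Proposition \ref{yamada} yields the contraction estimate $\|U^{\beta_n}u-U^{\beta_n}v\|\le(1-\beta_n\tau)\|u-v\|$. Set $E^n:=\sum_{i=1}^m\|e_i^n\|$, which is summable by assumption.

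First I would establish boundedness of $\{x^n\}$. Iterating quasi-nonexpansivity of each $T_i$ around $x^*$ with the additive errors $e_i^n$ gives $\|\varphi_m^n-x^*\|\le\|y^n-x^*\|+E^n$. Combining with $\|y^n-x^*\|\le(1-\beta_n\tau)\|x^n-x^*\|+\mu\beta_n\|F(x^*)\|$ and the convexity estimate $\|x^{n+1}-x^*\|\le(1-\lambda_n)\|y^n-x^*\|+\lambda_n\|\varphi_m^n-x^*\|$ produces a recursion of the form $\|x^{n+1}-x^*\|\le(1-\beta_n\tau)\|x^n-x^*\|+\beta_n M+E^n$ for a constant $M$, from which a standard induction using $\beta_n\in(0,1]$ and $\sum E^n<\infty$ yields boundedness.

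Next I would extract asymptotic regularity. Squaring the SQNE inequality for each $T_i$ and iterating along $i=1,\ldots,m$ (absorbing the errors $e_i^n$ into a term of order $E^n$ by boundedness, using $2\langle a,b\rangle\le\|a\|^2\cdot 0+\ldots$ carefully with Cauchy--Schwarz) gives
\[
\|\varphi_m^n-x^*\|^2\le\|y^n-x^*\|^2-\sum_{i=1}^{m}\|T_iS_{i-1}y^n-S_{i-1}y^n\|^2+CE^n
\]
for some constant $C$. Applying Proposition \ref{convex} to $x^{n+1}=(1-\lambda_n)y^n+\lambda_n\varphi_m^n$ produces a $-\lambda_n(1-\lambda_n)\|\varphi_m^n-y^n\|^2$ term on the right. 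Chaining with $\|y^n-x^*\|^2\le(1-\beta_n\tau)\|x^n-x^*\|^2+\beta_n M'$ and using $\lambda_n\in[\ep,1-\ep]$, $\beta_n\to 0$, $\sum E^n<\infty$, a telescoping argument (together with the boundedness already established) forces
\[
\sum_{i=1}^{m}\|T_iS_{i-1}y^n-S_{i-1}y^n\|^2\to 0\quad\text{and}\quad\|\varphi_m^n-y^n\|\to 0.
\]
This is the step I expect to be the main technical obstacle: carefully tracking the error contribution at each layer of the composition so that no unbounded multiplicative factor appears, which is what the summability of $\|e_i^n\|$ (as opposed to $\|e_i^n\|^2$) is designed to make work.

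Finally I would push this to strong convergence. From $\beta_n\to 0$ and boundedness, $\|y^n-x^n\|=\mu\beta_n\|F(x^n)\|\to 0$; combined with the asymptotic regularity of the layers (inductively replacing $S_{i-1}y^n$ by $x^n$ modulo vanishing errors) I obtain $\|T_i x^n-x^n\|\to 0$ for every $i$. Since each $T_i$ is nonexpansive, the demi-closedness principle (Fact after Fact \ref{fact3}) ensures every weak cluster point $\bar x$ of $\{x^n\}$ lies in $\bigcap_{i=1}^m\fix T_i$, and the VI characterization at $x^*$ then gives $\langle F(x^*),\bar x-x^*\rangle\ge 0$, so $\limsup_n\langle F(x^*),x^*-x^{n+1}\rangle\le 0$ by a subsequence argument. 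Rewriting the recursion in the form
\[
\|x^{n+1}-x^*\|^2\le(1-\beta_n\tau)\|x^n-x^*\|^2+\beta_n\tau\,\delta_n+\sigma_n,
\]
where $\delta_n$ involves $\langle F(x^*),x^*-x^{n+1}\rangle$ (extracted via the identity $\|U^{\beta_n}x^n-x^*\|^2=\|(1-\beta_n\tau)(x^n-x^*)-\mu\beta_n F(x^*)+\ldots\|^2$ and cross-term expansion) and $\sigma_n$ is summable, I apply the classical Xu-type lemma to conclude $\|x^n-x^*\|\to 0$.
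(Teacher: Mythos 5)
Your overall architecture (boundedness, asymptotic regularity of the composition layers, demiclosedness at weak cluster points, then a Xu-type lemma) matches the paper's, and your layer-by-layer bookkeeping of the errors via the $1$-SQNE property of each cutter $T_i$ is a viable alternative to the paper's route, which instead lumps all the errors into the single summable perturbation $u^n=\lambda_n(\varphi_m^n-T\varphi_0^n)$ and invokes Proposition \ref{prop-cegie} to convert $\|T\varphi_0^n-\varphi_0^n\|^2$ into control of $\sum_{i}\|S_i\varphi_0^n-S_{i-1}\varphi_0^n\|^2$; your version would in fact give a cleaner bound without the fourth power that appears in Lemma \ref{lemma32}.

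The genuine gap is the step where you claim that ``a telescoping argument \ldots forces'' $\sum_{i=1}^m\|T_iS_{i-1}y^n-S_{i-1}y^n\|^2\to 0$. At that point your recursion has the form $a_{n+1}\le a_n-\ep^2R_n+\xi_n$ with $a_n=\|x^n-x^*\|^2$, $R_n$ the sum of squared layer residuals, and $\xi_n=O(\beta_n)+O(E^n)$. Since the hypotheses impose $\sum_n\beta_n=+\infty$, the perturbation $\xi_n$ is \emph{not} summable, so telescoping only yields $\ep^2\sum_{n\le N}R_n\le a_1+\sum_{n\le N}\xi_n$ with a divergent right-hand side, and no conclusion about $R_n$ follows. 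This is exactly the obstruction the paper resolves with a two-case (Maing\'e-type) analysis: if $a_n$ is eventually decreasing it converges, so $a_n-a_{n+1}\to0$ together with $\xi_n\to0$ gives $R_n\to0$ directly; otherwise one works along the indices $\nu(n):=\max\{k\le n:\ a_k\le a_{k+1}\}$, extracts $R_{\nu(n)}\to0$ and then $a_{\nu(n)}\to0$ from the two key inequalities (Lemmas \ref{lemma32} and \ref{lemma33}), and transfers this to the whole sequence via $a_n\le a_{\nu(n)+1}$. Your proposal contains no analogue of this case distinction, nor a substitute such as an added hypothesis like $\beta_n/\beta_{n+1}\to1$ that would allow one first to prove $\|x^{n+1}-x^n\|\to0$; as written, the argument is only valid when $\|x^n-x^*\|$ happens to be eventually monotone, and the subsequent demiclosedness and Xu-lemma steps (which need $\limsup_n\delta_n\le0$, hence the cluster-point analysis, hence $R_n\to0$) cannot be launched in general.
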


\vskip0.5cm
\begin{remark} It is worth underlining that the assumptions on step sizes sequence $\{\beta_n\}_{n=1}^\infty$ hold true for several choices which include, for instance, $\beta_n:=\beta/n, n\geq1$, for any choice of $\beta\in(0,1]$. Moreover, the parameter $\mu$, which is used in Theorem \ref{main-thm}, need to be chosen in the interval $(0,2\eta /{\kappa ^2})$ so that the operator $Id-\mu \beta_n F$ is a contraction (see, Proposition \ref{yamada}) for any choice of the step sizes $\{\beta_n\}_{n=1}^\infty$.
\end{remark}

\vskip0.5cm
In order to proceed the convergence analysis, we will consider the following into 2 parts. Actually, we start in the first part with  a series of preliminary convergence results, and subsequently, present the main convergence proof of Theorem \ref{main-thm}.

\subsection{Preliminary Convergence Results}
\hskip0.6cm

Before we present some useful lemmas used in proving Theorem \ref{main-thm},  we will make use of the following notations:
the  compositions
$$T := {T_m}{T_{m - 1}}\cdots{T_1},$$
$$S_0:=Id,\indent {\rm and } \indent S_i:=T_iT_{i-1}\cdots T_1, \indent i=1,2,...,m.$$
Moreover, the iterate $x^{n+1}$ is the combination 
\begin{equation}\label{4}
{x^{n + 1}} = {w^n} + {u^n},
\end{equation}
where
\begin{eqnarray*}\label{18}
	{w^n} &:=& {\varphi_0^n} + {\lambda _n}(T{\varphi_0^n} - {\varphi_0^n}),\\
	{u^n} &:=& {\lambda _n}(\varphi _m^n - T{\varphi_0^n}),
\end{eqnarray*}
for all $n\geq1$. 

\vspace{0.5cm}
Now, we start the convergence proof with the following technical result.

\begin{lemma}\label{unto0} The series $\sum\limits_{n = 1}^\infty  {\| {u^n}\|}$ converges.
\end{lemma}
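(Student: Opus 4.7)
The plan is to bound $\|u^n\| = \lambda_n\|\varphi_m^n - T\varphi_0^n\|$ directly in terms of the error vectors $e_i^n$, and then invoke the summability hypothesis $\sum_{n=1}^\infty \|e_i^n\| < +\infty$ for each $i$. Since $T = S_m$, the key quantity to control is $\|\varphi_m^n - S_m\varphi_0^n\|$, i.e.\ the discrepancy between the perturbed composition (with added information at each stage) and the exact composition applied to $\varphi_0^n$.

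I will establish by a short induction on $i$ that
\[
\|\varphi_i^n - S_i\varphi_0^n\| \le \sum_{k=1}^{i}\|e_k^n\|, \qquad i=0,1,\ldots,m.
\]
The base case $i=0$ is trivial since $\varphi_0^n = S_0\varphi_0^n$ and the sum is empty. For the inductive step, write
\[
\varphi_i^n - S_i\varphi_0^n = T_i\varphi_{i-1}^n + e_i^n - T_iS_{i-1}\varphi_0^n,
\]
apply the triangle inequality, and use that each $T_i$ is firmly nonexpansive and hence nonexpansive (Fact~\ref{fact1*}) to bound $\|T_i\varphi_{i-1}^n - T_iS_{i-1}\varphi_0^n\| \le \|\varphi_{i-1}^n - S_{i-1}\varphi_0^n\|$. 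Combining with the inductive hypothesis gives the stated bound.

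Taking $i=m$ and using $\lambda_n \le 1$ together with $T=S_m$ yields
\[
\|u^n\| \le \sum_{k=1}^{m}\|e_k^n\|.
\]
Summing over $n$ and interchanging the (finite) sum over $k$ with the sum over $n$,
\[
\sum_{n=1}^{\infty}\|u^n\| \le \sum_{k=1}^{m}\sum_{n=1}^{\infty}\|e_k^n\| < +\infty,
\]
by the hypothesis of Theorem~\ref{main-thm}. This is the desired convergence. The argument is essentially a stability-of-composition estimate for nonexpansive maps, so no serious obstacle is expected; the only point that deserves care is invoking the nonexpansivity of each $T_i$ (via Fact~\ref{fact1*}) before telescoping the errors.
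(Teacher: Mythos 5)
Your proposal is correct and takes essentially the same approach as the paper: the paper derives the same bound $\|u^n\|\le\sum_{i=1}^m\|e_i^n\|$ by peeling the error terms off the nested composition one at a time using the triangle inequality and the nonexpansivity of each $T_i$, which is exactly the unrolled form of your induction on $i$, and then sums over $n$ as you do.
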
 
\begin{proof}Let $z\in \bigcap\limits_{i = 1}^m \fix{{T_i}}$ and $n\geq1$ be fixed.  By using the triangle inequality, we note that  
	\begin{eqnarray}\label{5}
	\| {x^{n + 1}} - z\|= 
	\| {w^n} + {u^n} - z\| 
	\le \| {w^n} - z\|  + \| {u^n}\|.
	\end{eqnarray}
	By using Proposition \ref{convex} and the quasi-nonexpansitivity of $T$, we obtain
	\begin{eqnarray}
	\|{w^n} - z{\|^2} &=& \|\varphi _0^n + {\lambda _n}(T\varphi _0^n - \varphi _0^n) - z\|^2\nonumber\\
	&=& \|{\lambda _n}(T\varphi _0^n - z) + (1 - {\lambda _n})\varphi _0^n - (1 - {\lambda _n})z\|^2\nonumber\\
	&=& \|{\lambda _n}(T\varphi _0^n - z) + (1 - {\lambda _n})(\varphi _0^n - z)\|^2\nonumber\\
	&=& {\lambda _n}\|T\varphi _0^n - z\|^2 + (1 - {\lambda _n})\|\varphi _0^n - z\|^2 - {\lambda _n}(1 - {\lambda _n})\|T\varphi _0^n - \varphi _0^n\|^2\nonumber\\
	&\le& {\lambda _n}\|\varphi _0^n - z\|^2 + (1 - {\lambda _n})\|\varphi _0^n - z\|^2 - {\lambda _n}(1 - {\lambda _n})\|T\varphi _0^n - \varphi _0^n\|^2\nonumber\\
	&=& \|\varphi _0^n - z{\|^2} - {\lambda _n}(1 - {\lambda _n})\|T\varphi _0^n - \varphi _0^n\|^2.\label{6}
	\end{eqnarray}
	Since the relaxation parameter $\{ \lambda _{n}\}_{n = 1}^\infty \subset(0,1)$, we obtain that
	\begin{eqnarray}\label{7}
	\|{w^n} - z\| \le \|{\varphi_0^n} - z\|,
	\end{eqnarray}
	and, subsequently, the inequality (\ref{5}) becomes
	\begin{equation}\label{8}
	\|{x^{n + 1}} - z\| \le \|{\varphi_0^n} - z\| + \|{u^n}\|.
	\end{equation}
	
	On the other hand,  the nonexpansitivity of  $T_i, i=1,\ldots,m$, and the triangle inequality yield
	{\small\begin{eqnarray}\label{en}
		\|{u^n}\| &=& \|{\lambda _n}(\varphi _m^n - T\varphi _0^n)\|\nonumber \\
		&\le& \|\varphi _m^n - T\varphi _0^n\|\nonumber \\
		&=& \|{T_m}({T_{m - 1}}(\cdots{T_2}({T_1}\varphi _0^n + e_1^n) + e_2^n\cdots) + e_{m - 1}^n) + e_m^n - {T_m}{T_{m - 1}}\cdots{T_1}\varphi _0^n\|\nonumber \\
		& \le& \|e_m^n\| + \|{T_m}({T_{m - 1}}(\cdots{T_2}({T_1}\varphi _0^n + e_1^n) + e_2^n\cdots) + e_{m - 1}^n) - {T_m}{T_{m - 1}}\cdots{T_1}\varphi _0^n\| \nonumber \\
		&\le&\|e_m^n\| + \|{T_{m - 1}}(\cdots{T_2}({T_1}\varphi _0^n + e_1^n) + e_2^n\cdots)  + e_{m - 1}^n - {T_{m - 1}}\cdots{T_1}\varphi _0^n\| \nonumber \\
		&\le& \|e_m^n\| + \|e_{m - 1}^n\| + \|{T_{m - 1}}(\cdots{T_2}({T_1}\varphi _0^n + e_1^n) + e_2^n\cdots) - {T_{m - 1}}\cdots{T_1}\varphi _0^n\| \nonumber \\
		&\vdots&\nonumber\\
		&\le& \sum\limits_{i = 1}^m {\|e_i^n\|}. \nonumber 
		\end{eqnarray}
	}
	Since, for each $i=1,\ldots,m$, $\sum\limits_{n = 1}^\infty \|e_i^n\| <  + \infty,$ we get
	\begin{eqnarray*}
		\sum\limits_{n = 1}^\infty  {\| {u^n}\|}  <  + \infty,
	\end{eqnarray*}
	as required.
\end{proof}
\vskip0.5cm

Before we proceed further convergence properties, we will show that the generated sequences are bounded as the following lemma.

\begin{lemma}
	The sequences $\{ {x^n}\} _{n = 1}^\infty$, $\{ {F(x^n)}\} _{n = 1}^\infty$ and $\{ {\varphi_0^n}\} _{n = 1}^\infty$ are bounded.
\end{lemma}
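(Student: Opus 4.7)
The plan is to show first that $\{x^n\}_{n=1}^\infty$ is bounded and then deduce the boundedness of the other two sequences from this. Fix any $z\in\bigcap_{i=1}^m\fix T_i$ (for instance the unique solution to Problem {\bf (VIP)}). The key is to marry the estimate $\|x^{n+1}-z\|\le\|\varphi_0^n-z\|+\|u^n\|$ (already derived in the proof of Lemma \ref{unto0} via the quasi-nonexpansivity of $T$) with Proposition \ref{yamada} applied to $U^{\beta_n}:=Id-\mu\beta_n F$.

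The first step is to bound $\|\varphi_0^n-z\|$. Since $\varphi_0^n=U^{\beta_n}(x^n)$ and $U^{\beta_n}(z)=z-\mu\beta_n F(z)$, the triangle inequality together with Proposition \ref{yamada} gives
\begin{equation*}
\|\varphi_0^n-z\|\le\|U^{\beta_n}(x^n)-U^{\beta_n}(z)\|+\|U^{\beta_n}(z)-z\|\le(1-\beta_n\tau)\|x^n-z\|+\mu\beta_n\|F(z)\|.
\end{equation*}
Combining with the estimate from Lemma \ref{unto0} one obtains the recursion
\begin{equation*}
\|x^{n+1}-z\|\le(1-\beta_n\tau)\|x^n-z\|+\beta_n\tau M+\|u^n\|,
\end{equation*}
where $M:=\mu\|F(z)\|/\tau$.

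Next, I would prove by induction that $\|x^n-z\|\le\max\{\|x^1-z\|,M\}+\sum_{k=1}^{n-1}\|u^k\|$ for all $n\ge1$. Denoting the right-hand side by $C_n$, note that $C_n\ge M$, so $\beta_n\tau M\le\beta_n\tau C_n$. Substituting the inductive hypothesis into the recursion then yields
\begin{equation*}
\|x^{n+1}-z\|\le(1-\beta_n\tau)C_n+\beta_n\tau C_n+\|u^n\|=C_n+\|u^n\|=C_{n+1},
\end{equation*}
which closes the induction. Because Lemma \ref{unto0} gives $\sum_{n=1}^\infty\|u^n\|<+\infty$, the sequence $\{C_n\}$ is bounded, and therefore so is $\{x^n\}_{n=1}^\infty$.

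Finally, I would use the $\kappa$-Lipschitz continuity of $F$ together with the triangle inequality, namely $\|F(x^n)\|\le\kappa\|x^n-z\|+\|F(z)\|$, to conclude that $\{F(x^n)\}_{n=1}^\infty$ is bounded. Since $\varphi_0^n=x^n-\mu\beta_n F(x^n)$ and $\beta_n\le1$, the estimate $\|\varphi_0^n\|\le\|x^n\|+\mu\|F(x^n)\|$ then gives boundedness of $\{\varphi_0^n\}_{n=1}^\infty$. The main (though mild) obstacle is handling the additive perturbation $\|u^n\|$ inside the contractive recursion; the trick is precisely to absorb it into a nondecreasing bound $C_n$ using the summability $\sum\|u^n\|<+\infty$, rather than trying to invoke a Banach-type contraction argument directly.
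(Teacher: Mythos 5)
Your proposal is correct and follows essentially the same route as the paper: both use Proposition \ref{yamada} to obtain the contraction-type recursion $\|x^{n+1}-z\|\le(1-\beta_n\tau)\|x^n-z\|+\mu\beta_n\|F(z)\|+\|u^n\|$, absorb the forcing term into a max, and close an induction whose accumulated error is controlled by the summability of $\|u^n\|$ from Lemma \ref{unto0}. Your explicit bound $C_n$ is just a slightly more spelled-out version of the paper's $\max\{\|x^1-z\|,\tfrac{\mu}{\tau}\|F(z)\|\}+\sum_{i=1}^{n}\|u^i\|$, and the deduction of boundedness of $\{F(x^n)\}$ and $\{\varphi_0^n\}$ is identical.
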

\begin{proof}
	Let $z\in \bigcap\limits_{i = 1}^m \fix{{T_i}}$ and $n\geq1$ be fixed. By using Proposition \ref{yamada}, we note that
	\begin{eqnarray}\label{lemma-key-contract}
	\|{\varphi_0^n} - z\| &=& \|{x^n} - \mu {\beta _{n}}F({x^n}) - z\|\nonumber\\
	&=& \|({x^n} - \mu {\beta _{n}}F({x^n})) - (z - \mu {\beta _{n}}F(z)) - \mu {\beta _{n}}F(z)\|\nonumber\\
	&\le& \|({x^n} - \mu {\beta _{n}}F({x^n})) - (z - \mu {\beta _{n}}F(z))\| + \mu {\beta _{n}}\|F(z)\|\nonumber\\
	&=& \|(Id - \mu {\beta _{n}}F){x^n} - (Id - \mu {\beta _{n}}F)z\| +\mu {\beta _{n}}\|F(z)\|\nonumber\\
	&\le& (1 - {\beta _{n}}\tau )\|{x^n} - z\| + \mu {\beta _{n}}\|F(z)\|,
	\end{eqnarray}
	where $\tau = 1 - \sqrt {1 + {\mu ^2}{\kappa ^2} - 2\mu \eta }  \in (0,1].$
	
	Now, by using  (\ref{8}) together with the above inequality, we have
	\begin{eqnarray*}
		\|{x^{n + 1}} - z\| &\le& \|{\varphi_0^n} - z\| + \|{u^n}\|\nonumber\\
		&\le& (1 - {\beta _{n}}\tau )\|{x^n} - z\| + \mu {\beta _{n}}\|F(z)\| + \|{u^n}\|\nonumber\\
		&\le& \max\left\{ \|{x^n} - z\|,\frac{\mu }{\tau }\|F(z)\|\right\}  + \|{u^n}\|.
	\end{eqnarray*}
	
	By the induction argument,
	we obtain that
	\begin{eqnarray*}
		\|{x^{n+1}} - z\| 
		&\le&\max\left\{ \|{x^1} - z\|,\frac{\mu }{\tau }\|F(z)\|\right\}  + \sum\limits_{i = 1}^{n} {\|{u^i}\|},\indent \forall n\geq1.
	\end{eqnarray*}
	
	By Lemma \ref{unto0}, we know that  $\sum\limits_{n = 1}^\infty  \|u^n\|  <  + \infty$, we obtain that $\{ x^n\} _{n = 1}^\infty$ is bounded. Moreover, the use of Lipschitz continuity of the operator $F$ implies that $\{ F({x^n})\} _{n = 1}^\infty$ is bounded, and consequently, $\{ \varphi_0^n\} _{n = 1}^\infty$ is also bounded.
\end{proof}

\vskip0.5cm

For an element $z\in \bigcap\limits_{i = 1}^m \fix{{T_i}}$ and  all $n\geq1$,   we denote from this point onward that
$$v := 2\left( {\mathop {\sup }\limits_{n \ge 1} \|{x^n} - z\| + \mu \|F(z)\|} \right) + \mathop {\sup }\limits_{n \ge 1} \|{u^n}\|<+\infty,$$
$${\xi _n}: = {\mu ^2}\beta _{n + 1}^2\|F({x^n})\|^2+ 2\mu {\beta _{n}}\|{x^n} - z\|\|F({x^n})\| + v\|{u^n}\|,$$
$${\delta _n}: = \frac{{{\beta _{n}}}}{\tau }\left({\mu ^2}\|F(z)\|^2 + 2{\mu ^2}\langle F({x^n}) - F(z),F(z)\rangle \right) + \frac{{2\mu }}{\tau }\langle {x^n} - z,-F(z)\rangle,$$
and
$${\alpha _n}: = {\beta _{n}}\tau.$$

\begin{lemma}\label{xi-lemma}The limit $\lim_{n \to \infty } {\xi _n} = 0$.
\end{lemma}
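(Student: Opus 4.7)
The plan is to show that each of the three summands comprising $\xi_n$ tends to zero individually, whereupon the conclusion follows by adding the limits.

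First, I would invoke the boundedness results from the previous lemma, which tells us that both $\{\|x^n - z\|\}_{n=1}^\infty$ (since $\{x^n\}$ is bounded and $z$ is fixed) and $\{\|F(x^n)\|\}_{n=1}^\infty$ are bounded. Call upper bounds for these quantities $M_1$ and $M_2$ respectively. Note also that $v$ is a finite constant, by Lemma \ref{unto0} and the established boundedness.

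Next, I would treat the three terms in $\xi_n$ separately. For the first term, since $\{\beta_n\}_{n=1}^\infty\subset(0,1]$ with $\beta_n\to 0$, we have $\beta_{n+1}^2\to 0$, hence $\mu^2\beta_{n+1}^2\|F(x^n)\|^2 \le \mu^2 M_2^2 \beta_{n+1}^2\to 0$. For the second term, $2\mu\beta_n\|x^n-z\|\|F(x^n)\|\le 2\mu M_1 M_2 \beta_n\to 0$, again using $\beta_n\to 0$. For the third term, the convergence of the series $\sum_{n=1}^\infty \|u^n\|$ established in Lemma \ref{unto0} forces $\|u^n\|\to 0$, so $v\|u^n\|\to 0$.

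Combining the three limits, $\lim_{n\to\infty}\xi_n = 0$, as desired. There is no substantial obstacle here: the lemma is a direct bookkeeping consequence of the hypothesis $\beta_n\to 0$, the boundedness results already in hand, and the summability of $\{\|u^n\|\}$ established in Lemma \ref{unto0}.
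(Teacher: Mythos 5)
Your proposal is correct and follows essentially the same route as the paper: the paper likewise bounds each of the three summands using the boundedness of $\{x^n\}$ and $\{F(x^n)\}$, the hypothesis $\beta_n \to 0$, and the fact that $\sum_{n=1}^\infty \|u^n\| < +\infty$ (hence $\|u^n\| \to 0$) from Lemma \ref{unto0}. Your write-up is merely a more explicit, term-by-term version of the paper's one-line argument.
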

\begin{proof}Invoking the boundedness of the sequences $\{ x^n\} _{n = 1}^\infty$  and $\{ F({x^n})\} _{n = 1}^\infty$, Lemma \ref{unto0}, and the assumption that $\lim_{n \to \infty } {\beta _n} = 0$, we obtain
	\begin{eqnarray*}
		0\leq {\xi _n} ={\mu ^2}\beta _{n}^2\|F({x^n})\|^2+ 2\mu{\beta _{n}} \|{x^n} - z\|\|F({x^n})\| + v\|{u^n}\|\to0,
	\end{eqnarray*}
	as desired.
\end{proof}
\vskip0.5cm

The following lemma states a key tool inequality on the generated sequence which will be formed the basis relation for our convergence results.
\begin{lemma}\label{lemma32}
	The following statement holds:
	\begin{eqnarray*}
		\|{x^{n + 1}} - z{\|^2} \le \|{x^n} -  z\|^2 - \frac{{{\lambda _n}(1 - {\lambda _n})}}{{4L^2}}\left(\sum\limits_{i = 1}^m {\|{S_i}\varphi _0^n - {S_{i - 1}}\varphi _0^n\|^2}\right)^2  + {\xi _n},
	\end{eqnarray*}
	for all $z\in \bigcap\limits_{i = 1}^m \fix{{T_i}}$ and  all $n\geq1$.
\end{lemma}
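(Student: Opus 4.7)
The plan is to decompose $x^{n+1}=w^n+u^n$ as already set up in the preamble, square the norm, and then treat the two resulting pieces separately: the $w^n$-piece will yield the $\|x^n-z\|^2$ term plus the $\|T\varphi_0^n-\varphi_0^n\|^2$ contraction, while the $u^n$-piece will collapse into the $v\|u^n\|$ summand of $\xi_n$. The lower bound on $\|T\varphi_0^n-\varphi_0^n\|^2$ in terms of the sum over $\|S_i\varphi_0^n-S_{i-1}\varphi_0^n\|^2$ comes from Proposition~\ref{prop-cegie} applied at $\varphi_0^n$.

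First I would write
\[
\|x^{n+1}-z\|^2=\|w^n-z\|^2+2\langle w^n-z,u^n\rangle+\|u^n\|^2\le\|w^n-z\|^2+(2\|w^n-z\|+\|u^n\|)\|u^n\|
\]
by Cauchy--Schwarz. Inequality~(\ref{6}), which was established using Proposition~\ref{convex} and the quasi-nonexpansiveness of $T$ (Facts~\ref{fact1*} and \ref{fact3}), already gives
\[
\|w^n-z\|^2\le\|\varphi_0^n-z\|^2-\lambda_n(1-\lambda_n)\|T\varphi_0^n-\varphi_0^n\|^2.
\]
Expanding $\|\varphi_0^n-z\|^2=\|x^n-z-\mu\beta_n F(x^n)\|^2$ and applying Cauchy--Schwarz on the cross term yields
\[
\|\varphi_0^n-z\|^2\le\|x^n-z\|^2+2\mu\beta_n\|x^n-z\|\|F(x^n)\|+\mu^2\beta_n^2\|F(x^n)\|^2,
\]
which accounts for the first two non-negative terms hidden inside $\xi_n$.

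Next I would handle the error coefficient. By (\ref{7}) and (\ref{lemma-key-contract}), together with $\beta_n\in(0,1]$ and $\tau\in(0,1]$, we have $\|w^n-z\|\le\|\varphi_0^n-z\|\le\|x^n-z\|+\mu\|F(z)\|$, so
\[
2\|w^n-z\|+\|u^n\|\le 2\bigl(\sup_{n\ge1}\|x^n-z\|+\mu\|F(z)\|\bigr)+\sup_{n\ge1}\|u^n\|=v,
\]
the constant being finite because $\{x^n\}$ and $\{u^n\}$ are bounded (the latter by Lemma~\ref{unto0}). Hence $(2\|w^n-z\|+\|u^n\|)\|u^n\|\le v\|u^n\|$, which is exactly the third summand of $\xi_n$. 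Finally, Proposition~\ref{prop-cegie} applied to $\varphi_0^n$ with any $L\ge\sup_{n\ge1}\|\varphi_0^n-z\|$, squared, gives
\[
\frac{1}{4L^2}\Bigl(\sum_{i=1}^m\|S_i\varphi_0^n-S_{i-1}\varphi_0^n\|^2\Bigr)^2\le\|T\varphi_0^n-\varphi_0^n\|^2,
\]
and substituting this into the contraction term above, after combining all pieces, delivers the claimed inequality.

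The main obstacle is purely bookkeeping: I need to verify that the coefficient of $\|u^n\|$ produced by the square expansion really fits under the single constant $v$ defined globally (which requires both the a priori bound on $\|\varphi_0^n-z\|$ from (\ref{lemma-key-contract}) and the uniform boundedness of the iterates), and that Proposition~\ref{prop-cegie} is applicable with a single $L$ valid for the entire sequence $\{\varphi_0^n\}_{n=1}^\infty$. Once these two uniform bounds are in place, the rest is a direct chain of inequalities.
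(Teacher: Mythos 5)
Your proposal is correct and follows essentially the same route as the paper's proof: the same decomposition $x^{n+1}=w^n+u^n$, the same use of (\ref{6}), (\ref{7}) and (\ref{lemma-key-contract}) to absorb the cross terms into $v\|u^n\|$ and the remaining $\beta_n$-terms of $\xi_n$, and the same final application of Proposition \ref{prop-cegie}. The only (harmless) divergence is that you require $L\ge\sup_{n\ge1}\|\varphi_0^n-z\|$, which is in fact the precise hypothesis needed to apply Proposition \ref{prop-cegie} at the point $\varphi_0^n$, whereas the paper takes $L=\sup_{n\ge1}\|x^n-z\|$.
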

\begin{proof}
	Let $z\in \bigcap\limits_{i = 1}^m \fix{{T_i}}$ and $n\geq1$ be fixed. From the inequality (\ref{lemma-key-contract}), we have 
	\begin{eqnarray*}
		\|{\varphi_0^n} - z\| \le \|{x^n} - z\| +\mu {\beta _{n}} \|F(z)\|.
	\end{eqnarray*}
	By using (\ref{5}) together with (\ref{7}) and the above inequality, we obtain that
	\begin{eqnarray}
	\|{x^{n + 1}} - z\|^2 &\le& {\left( {\|{w^n} - z\| + \|{u^n}\|} \right)^2}\nonumber\\
	&=& \|{w^n} - z\|^2 + 2\|{w^n} - z\|\|{u^n}\| + \|{u^n}\|^2\nonumber\\
	&\le& \|{w^n} - z\|^2 + \left( {2\|{\varphi_0^n} - z\| + \|{u^n}\|} \right)\|{u^n}\|\nonumber\\
	&\le& \|{w^n} - z\|^2 + \left[ {2\left( {\|x^n - z\| + \mu {\beta _{n}}\|F(z)\|} \right) + \|{u^n}\|} \right]\|{u^n}\|\nonumber\\
	&\le& \|{w^n} - z\|^2 + \left[ {2\left( \sup_{n \ge 1} \|x^n - z\|+ \mu \|F(z)\|\right) +  \sup_{n \ge 1} \|{u^n}\|} \right]\|{u^n}\|\nonumber\\
	&=& \|{w^n} - z\|^2 + v\|{u^n}\|,\label{15}\end{eqnarray}
	where the fifth inequality holds from the assumption that $\{ {\beta_n}\} _{n = 1}^\infty\subset (0,1]$ and the boundedness of the sequences $\{ {x^n}\} _{n = 1}^\infty$ and $\{ {u^n}\} _{n = 1}^\infty$.

	Invoking the obtained inequality (\ref{15}) in (\ref{6}), we obtain
	\begin{eqnarray}
	\|{x^{n + 1}} - z\|^2 &\le& \|{\varphi_0^n} - z\|^2 - {\lambda _n}(1 - {\lambda _n})\|{T}{\varphi_0^n} - {\varphi_0^n}\|^2 + v\|{u^n}\|\nonumber\\
	&=& \|{x^n} - \mu {\beta _{n}}F({x^n}) - z\|^2 - {\lambda _n}(1 - {\lambda _n})\|{T }{\varphi_0^n} - {\varphi_0^n}\|^2 + v\|{u^n}\|\nonumber\\
	&=& \|{x^n} - z\|^2 + {\mu^2}\beta _{n }^2\|F({x^n})\|^2 - 2\mu {\beta _{n}}\langle {x^n} - z,F({x^n})\rangle \nonumber\\ 
	&&- {\lambda _n}(1 - {\lambda _n})\|{T}{\varphi_0^n} - {\varphi_0^n}\|^2 + v\|{u^n}\|\nonumber\\
	&\le& \|{x^n} - z\|^2 + {\mu ^2}\beta _{n }^2\|F({x^n})\|^2 + 2\mu {\beta _{n}}\|{x^n} - z\| \|F({x^n})\| \nonumber\\
	&&- {\lambda _n}(1 - {\lambda _n})\|{T }{\varphi_0^n} - {\varphi_0^n}\|^2 + v\|{u^n}\|\nonumber\\
	&=& \|{x^n} - z\|^2 - {\lambda _n}(1 - {\lambda _n})\|T{\varphi_0^n} - {\varphi_0^n}\|^2 + {\xi _n},\nonumber
	\end{eqnarray}
	
	Putting $L: = \sup_{n \ge 1} \|{x^n} - z\|$, using the above ineqaulity, and Proposition \ref{prop-cegie}, we arrive that
	\[\|{x^{n + 1}} - z\|^2 \le \|{x^n} - z\|^2- \frac{{{\lambda _n}(1 - {\lambda _n})}}{{4L^2}}\left(\sum\limits_{i = 1}^m {\|{S_i}\varphi _0^n - {S_{i - 1}}\varphi _0^n\|^2}\right)^2  + {\xi _n},\]
	which completes the proof.
\end{proof}

\vskip0.5cm

The following lemma shows that the weak cluster point of the generated sequences belongs to the intersection of fixed point sets.

\begin{lemma}\label{lemma34}
	If the sequence $\{ {\varphi_0^n}\} _{n = 1}^\infty$ satisfying $\|{S_i}{\varphi_0^n} - {S_{i - 1}}{\varphi_0^n}\| \to 0$ for all $i=1,\ldots,m$, then the weak cluster point $z\in\Hi$ of $\{ {\varphi_0^n}\} _{n = 1}^\infty$  belongs to $\bigcap\limits_{i = 1}^m \fix{{T_i}}$.
\end{lemma}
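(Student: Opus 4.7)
The plan is to unwind the composition one operator at a time and apply the demiclosedness principle at each stage. Fix a subsequence $\{\varphi_0^{n_k}\}_{k=1}^\infty$ with $\varphi_0^{n_k}\rightharpoonup z$; the hypothesis $\|S_i\varphi_0^n-S_{i-1}\varphi_0^n\|\to 0$ is of course inherited along this subsequence.

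First I would establish, by induction on $i\in\{0,1,\ldots,m\}$, that $S_i\varphi_0^{n_k}\rightharpoonup z$. The base case $i=0$ is immediate since $S_0=Id$. For the inductive step, the identity
\[S_i\varphi_0^{n_k}=S_{i-1}\varphi_0^{n_k}+\bigl(S_i\varphi_0^{n_k}-S_{i-1}\varphi_0^{n_k}\bigr)\]
expresses $S_i\varphi_0^{n_k}$ as the sum of a sequence converging weakly to $z$ and one converging strongly to $0$, so the weak limit $z$ is preserved.

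Next, for each $i\in\{1,\ldots,m\}$, the key observation is that $(T_i-Id)S_{i-1}\varphi_0^{n_k}=S_i\varphi_0^{n_k}-S_{i-1}\varphi_0^{n_k}\to 0$ strongly, while $S_{i-1}\varphi_0^{n_k}\rightharpoonup z$ by the previous step. Since each $T_i$ is firmly nonexpansive with $\fix T_i\ne\emptyset$, Fact~\ref{fact1*} makes $T_i$ nonexpansive, and the demiclosedness principle recalled earlier in the excerpt then guarantees that $T_i-Id$ is demiclosed at $0$. Applying this principle to $\{S_{i-1}\varphi_0^{n_k}\}$ forces $z\in\fix T_i$; since $i$ is arbitrary, $z\in\bigcap_{i=1}^m\fix T_i$.

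The only subtle point is the bookkeeping in the first step: one has to propagate the weak limit through the intermediate compositions $S_{i-1}$ before the demiclosedness principle can be invoked at each stage. Once that short induction is in place, the argument is essentially a repeated application of the demiclosedness principle, and no further analytic machinery is required.
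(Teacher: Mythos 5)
Your proposal is correct and follows essentially the same route as the paper: unwind the composition one operator at a time, propagate the weak limit $z$ through the intermediate iterates $S_{i-1}\varphi_0^{n_k}$, and invoke the demiclosedness principle for each $T_i$ (available since each firmly nonexpansive $T_i$ is nonexpansive with nonempty fixed point set). The only cosmetic difference is that you propagate the weak limit by adding the strongly null term $S_i\varphi_0^{n_k}-S_{i-1}\varphi_0^{n_k}$ directly, whereas the paper writes the same fact via $T_i S_{i-1}\varphi_0^{n_k}\rightharpoonup T_i z=z$ after first establishing $z\in\fix T_i$; both are valid.
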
	\begin{proof}
	Since $\{ {\varphi_0^n}\} _{n = 1}^\infty$ is bounded, we let $z \in \Hi$ be a weak cluster point of $\{ {\varphi_0^n}\} _{n = 1}^\infty$, and let $\{ {\varphi_{0}^{n_k}}\} _{k = 1}^\infty  \subset \{ {\varphi_0^n}\} _{n = 1}^\infty$ be a subsequence such that $\varphi_{0}^{n_k}\rightharpoonup z$. Now, we note that
	\begin{equation*}\label{wc-ii-2}
	\| ({T_1} - Id){\varphi_{0}^{n_k}}\|  = \| {T_1}{\varphi_{0}^{n_k}} - {\varphi_{0}^{n_k}}\|  = \| {S_1}{\varphi_{0}^{n_k}} - {S_0}{\varphi_{0}^{n_k}}\|  \to 0.
	\end{equation*}
	Since $T_1$ satisfies the DC principle, we obtain that
	\begin{equation*}\label{wc-ii-3}
	z\in\fix T_1.
	\end{equation*}
	Note that 
	\begin{equation*}\label{wc-ii-4}
	\| ({T_1}{\varphi_{0}^{n_k}} - {T_1}z) - ({\varphi_{0}^{n_k}} - z)\| =\| ({T_1} - Id){\varphi_{0}^{n_k}}\|     \to 0,
	\end{equation*}
	and  ${\varphi_{0}^{n_k}}\rightharpoonup z$ together imply that  $${T_1}{\varphi_{0}^{n_k}} \rightharpoonup{T_1}z=z.$$  
	But we know that
	\begin{equation*}\label{wc-ii-6}
	\| ({T_2} - Id){T_1}{\varphi_{0}^{n_k}}\|  = \| {T_2}{T_1}{\varphi_{0}^{n_k}} - {T_1}{\varphi_{0}^{n_k}}\|  = \| {S_2}\varphi_{0}^{n_k} - {S_1}{\varphi_{0}^{n_k}}\|  \to 0,
	\end{equation*}
	and, consequently, the DC principle of $T_2$ yields  that 
	\begin{equation*}\label{wc-ii-7}
	z\in\fix T_2.
	\end{equation*}
	By proceeding  the above proving lines, we obtain that $$z\in\fix T_i\indent \forall i=1,2,...,m,$$ 
	which means that $z\in\bigcap_{i=1}^m\fix T_i$. 		
\end{proof}

\vskip0.5cm

The following lemma presents the key relation for obtaining the strong convergence of the generated sequence.

\begin{lemma}\label{lemma33}
	The following statement holds:
	\begin{eqnarray*}
		\|{x^{n + 1}} - z{\|^2} \le (1 - {\alpha _n})\|{x^n} - z{\|^2} + {\alpha _n}{\delta _n} + v\|{u^n}\|,
	\end{eqnarray*}
	for all $z\in \bigcap\limits_{i = 1}^m \fix{{T_i}}$ and  all $n\geq1$.
\end{lemma}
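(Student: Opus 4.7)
The target inequality splits cleanly into two stages: first control $\|\varphi_0^n-z\|^2$ by $(1-\alpha_n)\|x^n-z\|^2+\alpha_n\delta_n$, then glue on the $v\|u^n\|$ term coming from the already-established inequality \eqref{15} together with $\|w^n-z\|\le\|\varphi_0^n-z\|$ from \eqref{7}.

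For the first stage, I would set $U^n:=Id-\mu\beta_n F$ and write
\[
\varphi_0^n-z=(U^nx^n-U^nz)+(U^nz-z)=(U^nx^n-U^nz)-\mu\beta_n F(z).
\]
Expanding the square gives
\[
\|\varphi_0^n-z\|^2=\|U^nx^n-U^nz\|^2-2\mu\beta_n\langle U^nx^n-U^nz,F(z)\rangle+\mu^2\beta_n^2\|F(z)\|^2.
\]
Proposition \ref{yamada} bounds the first term by $(1-\beta_n\tau)^2\|x^n-z\|^2$, and since $\beta_n\tau\in[0,1]$ we may replace $(1-\beta_n\tau)^2$ by $(1-\beta_n\tau)=(1-\alpha_n)$. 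In the middle term I substitute $U^nx^n-U^nz=(x^n-z)-\mu\beta_n(F(x^n)-F(z))$, producing the two inner products $\langle x^n-z,-F(z)\rangle$ and $\langle F(x^n)-F(z),F(z)\rangle$ that appear in $\delta_n$. Factoring $\alpha_n=\beta_n\tau$ out of the residual terms recovers exactly the definition of $\delta_n$, yielding
\[
\|\varphi_0^n-z\|^2\le(1-\alpha_n)\|x^n-z\|^2+\alpha_n\delta_n.
\]

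For the second stage I use \eqref{15}, namely $\|x^{n+1}-z\|^2\le\|w^n-z\|^2+v\|u^n\|$, and the monotonicity $\|w^n-z\|^2\le\|\varphi_0^n-z\|^2$ inherited from \eqref{7}. Substituting the bound from the first stage then gives the claim.

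The only nonroutine bookkeeping is the passage $(1-\beta_n\tau)^2\le1-\beta_n\tau$, which needs $\beta_n\tau\in[0,1]$; this is immediate since $\tau\in(0,1]$ by Proposition \ref{yamada} and $\beta_n\in(0,1]$ by hypothesis. I don't expect any genuine obstacle here: the lemma is essentially a book-keeping recombination of \eqref{7}, \eqref{15}, and the contraction estimate from Proposition \ref{yamada} squared rather than taken to the first power as in \eqref{lemma-key-contract}.
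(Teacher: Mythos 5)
Your proposal is correct and follows essentially the same route as the paper: the same decomposition $\varphi_0^n-z=(U^nx^n-U^nz)-\mu\beta_nF(z)$, the same expansion of the square, the contraction bound from Proposition \ref{yamada} followed by $(1-\beta_n\tau)^2\le 1-\beta_n\tau$, and the same regrouping into $\alpha_n\delta_n$; the paper merely carries the $v\|u^n\|$ term along from the start instead of gluing it on at the end. No gaps.
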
	\begin{proof}
	Let $z\in \bigcap\limits_{i = 1}^m \fix{{T_i}}$ and $n\geq1$ be fixed.  By utilizing  the inequalities (\ref{7}), (\ref{15}), and Proposition \ref{yamada}, we note that
	{\small	\begin{eqnarray}
		\|{x^{n + 1}} - z{\|^2} &\le& \|{w^n} - z{\|^2} + v\|{u^n}\|\nonumber\\
		&\le& \|{\varphi_0^n} - z{\|^2} + v\|{u^n}\|\nonumber\\
		&=& \|{x^n} - \mu {\beta _{n}}F({x^n}) - z + \mu {\beta _{n}}F(z) - \mu {\beta _{n}}F(z){\|^2} + v\|{u^n}\|\nonumber\\
		&=& \|[({x^n} - \mu {\beta _{n}}F({x^n})) - (z - \mu {\beta _{n}}F(z))] - \mu {\beta _{n}}F(z){\|^2} + v\|{u^n}\|\nonumber\\
		&=& \|({x^n} - \mu {\beta _{n}}F({x^n})) - (z - \mu {\beta _{n}}F(z)){\|^2} + \|\mu {\beta _{n}}F(z){\|^2}\nonumber\\
		&&- 2\langle {x^n} - \mu {\beta _{n}}F({x^n}) - z + \mu {\beta _{n}}F(z),\mu {\beta _{n}}F(z)\rangle  + v\|{u^n}\|\nonumber\\
		&=& \|(Id - \mu {\beta _{n}}F){x^n} - (Id - \mu {\beta _{n}}F)z{\|^2} + {\mu ^2}\beta _{n}^2\|F(z){\|^2}\nonumber\\
		&&- 2\langle ({x^n} - z) - (\mu {\beta _{n}}F({x^n}) - \mu {\beta _{n}}F(z)),\mu {\beta _{n}}F(z)\rangle  + v\|{u^n}\|\nonumber\\
		&\le& {(1 - {\beta _{n}}\tau )^2}\|{x^n} - z{\|^2}+{\mu ^2}\beta _{n}^2\|F(z){\|^2} + v\|{u^n}\|\nonumber\\
		&&- 2\langle ({x^n} - z) - \mu {\beta _{n}}(F({x^n}) - F(z)),\mu {\beta _{n}}F(z)\rangle\nonumber \\
		&\le& (1 - {\beta _{n}}\tau )\|{x^n} - z{\|^2} + {\mu ^2}\beta _{n}^2\|F(z){\|^2} + v\|{u^n}\|\nonumber\\
		&&- 2\mu {\beta _{n}}\langle {x^n} - z,F(z)\rangle  + 2{\mu ^2}\beta _{n }^{^2}\langle F({x^n}) - F(z),F(z)\rangle \nonumber\\
		&=& (1 - {\beta _{n}}\tau )\|{x^n} - z{\|^2} + v\|{u^n}\|\nonumber\\
		&&+ {\beta _{n}}\left[ {{\mu ^2}{\beta _{n}}\|F(z){\|^2} - 2\mu \langle {x^n} - z,F(z)\rangle  + 2{\mu ^2}{\beta _{n}}\langle F({x^n}) - F(z),F(z)\rangle } \right]\nonumber\\
		&=& (1 - {\beta _{n}}\tau )\|{x^n} - z{\|^2} + v\|{u^n}\|\nonumber\\
		&&+ {\beta _{n}}\tau \left[ {\frac{{{\beta _{n}}}}{\tau }\left( {{\mu ^2}\|F(z){\|^2} + 2{\mu ^2}\langle F({x^n}) - F(z),F(z)\rangle } \right) + \frac{{2\mu }}{\tau }\langle {x^n} - z,-F(z)\rangle } \right]\nonumber\\
		&=& (1 - {\alpha _n})\|{x^n} - z{\|^2} + {\alpha _n}{\delta _n} + v\|{u^n}\|,\nonumber
		\end{eqnarray}
	}
	which completes the proof.
\end{proof}

\subsection{Convergence Proof}
\hskip0.6cm

In order to prove our main theorem, we need the following proposition which proved in \cite{X02}.

\begin{proposition}\label{xu}
	Let $\{ {a_n}\} _{n = 1}^\infty$ be a sequence of nonnegative real numbers satisfying the inequality 
	\[{a_{n + 1}} \le (1 - {\alpha _n}){a_n} + {\alpha _n}{\beta _n} + {\gamma _n},\]
	where $\{ {\alpha _n}\} _{n = 1}^\infty  \subseteq [0,1]$ with $\sum\limits_{n = 1}^\infty  {{\alpha _n}}  = +\infty$, $\{ {\beta _n}\} _{n = 1}^\infty$ is a sequence of real numbers such that $\limsup\limits_{n\rightarrow0}\beta_n\le 0$ and $\{ {\gamma _n}\} _{n = 1}^\infty$ is a sequence of real numbers such that 
	$\sum\limits_{n = 1}^\infty  {{\gamma _n}}  <  + \infty$. Then $\mathop {\lim }\limits_{n \to \infty } {a_n} = 0$.
\end{proposition}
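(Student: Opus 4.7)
My plan is to prove this by a standard $\epsilon$-$N$ argument that exploits the telescoping structure of the recursion. First, I would fix an arbitrary $\epsilon>0$ and use the two hypotheses on $\{\beta_n\}$ and $\{\gamma_n\}$ separately: since $\limsup_{n\to\infty}\beta_n\le 0$ (reading the ``$n\to 0$'' in the statement as a typo for $n\to\infty$), there is an index $N_1$ beyond which $\beta_n\le\epsilon$; and since $\sum_{n=1}^{\infty}\gamma_n$ converges, the Cauchy criterion yields an index $N_2$ such that $\bigl|\sum_{i=N_2}^{n}\gamma_i\bigr|\le\epsilon$ for all $n\ge N_2$. Setting $N:=\max\{N_1,N_2\}$, I would restrict attention to iterates with $n\ge N$.

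The heart of the argument is to unroll the one-step recursion. A straightforward induction on $n$ gives, for all $n\ge N$,
\[a_{n+1}\le \Bigl(\prod_{i=N}^{n}(1-\alpha_i)\Bigr)a_N + \sum_{i=N}^{n}\alpha_i\beta_i\prod_{j=i+1}^{n}(1-\alpha_j) + \sum_{i=N}^{n}\gamma_i\prod_{j=i+1}^{n}(1-\alpha_j),\]
with the convention that an empty product equals $1$. The key identity is the telescoping
\[\sum_{i=N}^{n}\alpha_i\prod_{j=i+1}^{n}(1-\alpha_j) \;=\; 1-\prod_{i=N}^{n}(1-\alpha_i) \;\le\; 1,\]
which, combined with $\beta_i\le\epsilon$ for $i\ge N$, bounds the middle sum above by $\epsilon$. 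For the $\gamma$-sum, each weight $\prod_{j=i+1}^{n}(1-\alpha_j)$ lies in $[0,1]$, so the sum is dominated by the tail $\sum_{i=N}^{\infty}|\gamma_i|$ (or, more cheaply, by the tail of $\sum\gamma_i$ when $\gamma_n\ge 0$, which is the situation in every application of this proposition in the paper).

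Finally, I would invoke the classical consequence of $\sum_{n=1}^{\infty}\alpha_n=+\infty$ together with $\alpha_n\in[0,1]$: the elementary estimate $1-x\le e^{-x}$ forces $\prod_{i=N}^{n}(1-\alpha_i)\to 0$ as $n\to\infty$, so the initial term $\prod_{i=N}^{n}(1-\alpha_i)\,a_N$ drops below $\epsilon$ for $n$ large enough. Combining the three contributions gives $\limsup_{n\to\infty}a_n\le 3\epsilon$, and since $a_n\ge 0$ and $\epsilon>0$ was arbitrary, the conclusion $\lim_{n\to\infty}a_n=0$ follows.

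The main obstacle here is bookkeeping rather than insight: carrying out the inductive unrolling carefully and tracking the three error contributions without miscounting indices or products. The only mildly delicate point is the $\gamma$-weighted sum when $\sum\gamma_n$ converges only conditionally, which would force an Abel-type summation by parts; in the intended use $\gamma_n\ge 0$, so the tail control of the weighted sum is immediate from the tail of the series itself.
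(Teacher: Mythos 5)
Your proof is correct. The paper does not actually prove this proposition---it is imported from Xu \cite{X02} without proof---and your unrolling/telescoping argument (bound $\beta_i\le\epsilon$ past some index, use the identity $\sum_{i=N}^{n}\alpha_i\prod_{j=i+1}^{n}(1-\alpha_j)=1-\prod_{i=N}^{n}(1-\alpha_i)$, and kill the initial term via $1-x\le e^{-x}$ and $\sum_n\alpha_n=+\infty$) is essentially the standard proof of that lemma. The one point you rightly flag, namely that $\sum_n\gamma_n$ might converge only conditionally under the literal wording of the statement, is harmless in every application made in this paper, since there $\gamma_n=v\|u^n\|\ge 0$; for the statement as written in full generality one would either assume $\gamma_n\ge 0$ (as Xu does) or carry out the Abel-summation step you sketch, using that the weights $\prod_{j=i+1}^{n}(1-\alpha_j)$ are monotone in $i$ and lie in $[0,1]$.
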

\vskip0.5cm

We are now in a position to prove Theorem \ref{main-thm}.
\vskip0.5cm

\begin{proof}
	Let $\bar u$ be the unique solution to Problem {\bf (VIP)}. Then, $\bar u\in\bigcap_{i=1}^m\fix T_i$ and all above results hold true with replacing $z=\bar u$. Now, for simplicity, we denote ${a_n}: = \|{x^n} - \bar u{\|^2}$. Firstly, it should be remembered from Lemma \ref{unto0} and Lemma \ref{xi-lemma} that $\mathop {\lim }\limits_{n \to \infty } v\|{u^n}\| = 0$ and $\lim_{n \to \infty } {\xi _n} = 0$, respectively.
	
	We will show that the generated sequence $\{ {x^n}\} _{n = 1}^\infty$ converges strongly to $\bar u$ by considering the two following cases.
	
	\vspace{0.5cm}
	\indent\textbf{Case 1.} Suppose that $\{a_n\} _{n = 1}^\infty$ is eventually decreasing, i.e., there exists ${n_0} \ge 1$ such that ${a_{n + 1}} < {a_n}$ for all $n \ge {n_0}$. In this case, $\{a_n\} _{n = 1}^\infty$ must be convergent. Setting $\mathop {\lim }\limits_{n \to \infty } {a_n} = r$.  In view of Lemma \ref{lemma32} with $z=\bar u$ and using Lemma \ref{xi-lemma}, we have
	\begin{eqnarray*}
		0 &\le& \limsup_{n \to \infty } \frac{{{\lambda _n}(1 - {\lambda _n})}}{{4L^2}}\left(\sum\limits_{i = 1}^m {\|{S_i}{\varphi_0^n} - {S_{i - 1}}{\varphi_0^n}{\|^2}}\right)^2\\
		&\le&\limsup_{n \to \infty } \left( {{a_n} - {a_{n + 1}}} + \xi_n\right) = \lim_{n \to \infty } a_n - \lim_{n \to \infty }a_{n+1}+\lim_{n \to \infty }\xi_n= 0,\nonumber
	\end{eqnarray*}
	and hence
	\begin{equation*}\label{18}
	\mathop {\lim }\limits_{n \to \infty } \frac{{{\lambda _n}(1 - {\lambda _n})}}{{4L^2}}\left(\sum\limits_{i = 1}^m {\|{S_i}{\varphi_0^n} - {S_{i - 1}}{\varphi_0^n}{\|^2}}\right)^2  = 0.
	\end{equation*}
	Since $\{ {\lambda _n}\} _{n = 1}^\infty  \subset [\varepsilon,1-\varepsilon]$, we have ${\lambda _n}(1 - {\lambda _n}) \ge \varepsilon^2$ for all $n\ge1$,
	and, consequently, 
	\[\mathop {\lim }\limits_{n \to \infty } \sum\limits_{i = 1}^m {\|{S_i}{\varphi_0^n} - {S_{i - 1}}{\varphi_0^n}{\|^2}}  = 0,\]
	which implies that,  for all $i=1,2,...,m,$
	\begin{equation}\label{20}
	\mathop {\lim }\limits_{n \to \infty } \|{S_i}{\varphi_0^n} - {S_{i - 1}}{\varphi_0^n}\| = 0.     
	\end{equation}
	
	On the other hand, since the sequence $\{ {\varphi_0^n}\} _{n = 1}^\infty$ is bounded, we have $\{ \langle {\varphi_0^n} - \bar u, - F(\bar u)\rangle \} _{n = 1}^\infty$ is also  bounded. Now, let $\{ {\varphi_{0}^{n_k}}\} _{k = 1}^\infty$ be a subsequence of $\{ {\varphi_0^n}\} _{n = 1}^\infty$ such that
	\[\limsup\limits_{n\rightarrow\infty}\langle {\varphi_0^n} - \bar u, - F(\bar u)\rangle  = \mathop {\lim }\limits_{k \to \infty } \langle {\varphi_{0}^{n_k}} - \bar u, - F(\bar u)\rangle. \]
	Since $\{ {\varphi_{0}^{n_k}}\} _{k = 1}^\infty$ is of course bounded, it indeed has a weakly cluster point $z\in\Hi$ and a subsequence $\{ {\varphi_{0}^{{n_{{k_j}}}}}\} _{j = 1}^\infty$ such that ${\varphi_{0}^{{n_{{k_j}}}}}\rightharpoonup z\in\Hi$. Thus, it follows from  Lemma \ref{lemma34} and (\ref{20}) that $z \in\bigcap_{i=1}^m\fix T_i$. Since $\bar u$ is the unique solution to Problem {\bf (VIP)}, we have
	\begin{eqnarray}\label{limsupvar}\hspace{-0.5cm}\limsup_{n\rightarrow\infty} \langle {\varphi_0^n} - \bar u, - F(\bar u)\rangle  &=& \lim_{k \to \infty } \langle {\varphi_{0}^{n_k}} - \bar u, - F(\bar u)\rangle\nonumber\\
	&=& \lim_{j \to \infty } \langle {\varphi_{0}^{{n_{{k_j}}}}} - \bar u, - F(\bar u)\rangle  = \langle z - \bar u, - F(\bar u)\rangle  \le 0.
	\end{eqnarray}
	
	Now, let us note that
	\begin{eqnarray*}\langle {\varphi_0^n} - \bar u, - F(\bar u)\rangle  &=& \langle {x^n} - \mu {\beta _{n}}F({x^n}) - \bar u, - F(\bar u)\rangle\\
		&=& \langle {x^n} - \bar u, - F(\bar u)\rangle  - \mu {\beta _{n}}\langle F({x^n}), - F(\bar u)\rangle,
	\end{eqnarray*}
	and by setting $p := \mathop {\sup }\limits_{n \ge 1} \|F({x^n})\| <  + \infty$, we have 
	\begin{eqnarray}
	\langle{x^n} - \bar u, - F(\bar u)\rangle  &=& \langle {\varphi_0^n} - \bar u, - F(\bar u)\rangle  + \mu {\beta _{n}}\langle F({x^n}), - F(\bar u)\rangle\nonumber \\
	&\le& \langle {\varphi_0^n} - \bar u, - F(\bar u)\rangle  + \mu {\beta _{n}}\|F({x^n})\|\|-F(\bar u)\rangle \|\nonumber\\
	&\le& \langle {\varphi_0^n} - \bar u, - F(\bar u)\rangle  + \mu {\beta _{n}}\|F(\bar u)\| \sup_{n \ge 1}\|F({x^n})\|\nonumber\\
	&=& \langle {\varphi_0^n} - \bar u, - F(\bar u)\rangle  + \mu p{\beta _{n}}\|F(\bar u) \|\nonumber.
	\end{eqnarray}
	Invoking the assumption  $\mathop {\lim }\limits_{n \to \infty } \beta _{n} = 0$ and (\ref{limsupvar}), we obtain
	\begin{eqnarray}\label{21}
	\limsup\limits_{n\rightarrow\infty}\langle {x^n} - \bar u, - F(\bar u)\rangle  \le \limsup\limits_{n\rightarrow\infty}\langle {\varphi_0^n} - \bar u, - F(\bar u)\rangle + \mu p\|F(\bar u) \|\lim_{n\rightarrow\infty}{\beta _{n}} \le 0.
	\end{eqnarray}
	
	In view of $\delta_n$ with replacing $z=\bar u$, we get
	\begin{eqnarray}
	{\delta _n}&=& \frac{{{\beta _{n}}}}{\tau }\left({\mu ^2}\|F(\bar u){\|^2} + 2{\mu ^2}\langle F({x^n}) - F(\bar u),F(\bar u)\rangle\right) + \frac{{2\mu }}{\tau }\langle {x^n} - \bar u,-F(\bar u)\rangle\nonumber\\
	&\le& \frac{{{\beta _{n}}}}{\tau }\left({\mu ^2}\|F(\bar u){\|^2} + 2{\mu ^2}\mathop {\sup }\limits_{n \ge 1}\langle F({x^n}) - F(\bar u),F(\bar u)\rangle \right) + \frac{{2\mu }}{\tau }\langle {x^n} - \bar u,-F(\bar u)\rangle\nonumber\\
	&=& \frac{1}{\tau }\left({\mu ^2}\|F(\bar u){\|^2} + 2{\mu ^2}q \right){\beta _{n}} + \frac{{2\mu }}{\tau }\langle {x^n} - \bar u,-F(\bar u)\rangle,\nonumber
	\end{eqnarray}
	where $q:=\mathop {\sup }\limits_{n \ge 1}\langle F({x^n}) - F(\bar u),F(\bar u)\rangle<+\infty$. Again, the assumption $\mathop {\lim }\limits_{n \to \infty } {\beta _n} = 0$ and (\ref{21}) yield that 
	\begin{eqnarray}\label{limdelta}
	\limsup\limits_{n\rightarrow\infty}{\delta _n} &=& \frac{1}{\tau }\left({\mu ^2}\|F(\bar u){\|^2} + 2{\mu ^2}q \right)\lim\limits_{n\rightarrow\infty}{\beta _{n}} + \frac{{2\mu }}{\tau }\limsup\limits_{n\rightarrow\infty}\langle {x^n} - \bar u,-F(\bar u)\rangle\nonumber\\
	&=& \frac{{2\mu }}{\tau }\limsup\limits_{n\rightarrow\infty}\langle {x^n} - \bar u, - F(\bar u)\rangle\le 0.\label{21-1}
	\end{eqnarray}
	Finally, in view of Lemma \ref{lemma33} with $z=\bar u$, we have $$a_{n+1} \le (1 - {\alpha _n})a_n + {\alpha _n}{\delta _n} + v\|u^n\|.$$
	Since $\alpha_n=\beta _{n}\tau$, and we know that $\tau\leq1$, we have $\{\alpha_n\}_{n=1}^\infty\subset(0,1]$. Moreover, since $\sum\limits_{n = 1}^\infty  {{\beta _{n}}}  = +\infty$, we have $\sum\limits_{n = 1}^\infty  {{\alpha _n}}  =  \tau \sum\limits_{n = 1}^\infty  {{\beta _{n}}}  = +\infty$. Hence, by using (\ref{limdelta}), Lemma \ref{unto0}, and applying  Proposition \ref{xu}, we conclude that $\mathop {\lim }\limits_{n \to \infty }\|{x^n} - \bar u\|=0$.
	
	\vspace{0.5cm}
	\textbf{Case 2.} Suppose that $\{a_n\}_{n=1}^\infty$ is not eventually decreasing. Thus, we can  find an integer $n_0$ such that ${a_{{n_0}}} \le {a_{{n_0} + 1}}$. Now, for each $n\ge n_0$, we define
	\begin{equation*}\label{jn}
	{J_n}: = \left\{ {k \in [{n_0},n]:{a_k} \le {a_{k + 1}}} \right\}.
	\end{equation*}
	Observe that $n_0\in J_n$, i.e., $J_n$ is nonempty and satisfies ${J_n} \subseteq {J_{n + 1}}$. For each $n\ge n_0$, we denote
	\begin{equation*}\label{nun}
	\nu(n):=\max J_n.
	\end{equation*}
	Note that $\nu(n)\to\infty$ as $n\to\infty$ and $\{\nu(n)\}_{n\ge n_0}$ is nondecreasing. Furthermore, we have
	\begin{equation}\label{22}
	a_{\nu(n)}\le a_{\nu(n)+1} \indent   \forall n\ge n_0.     
	\end{equation}
	Next, we will show that 
	\begin{equation}\label{23}
	a_{n}\le a_{\nu(n)+1} \indent     \forall n\ge n_0.
	\end{equation}
	
	For all $n\ge n_0$, we have from the definition of $J_n$ that it is either $\nu(n)= n$ or $\nu(n)<n$. Thus, in order to prove the above inequality, we consider these 2 cases: 
	
	For  $\nu(n)= n$, we immediately get $a_{n}=a_{\nu(n)}\le a_{\nu(n)+1}$. 
	
	For $\nu(n)<n$, we notice that if $\nu(n)= n-1$, then the inequality (\ref{23}) is trivial as $a_n=a_{\nu(n) +1}$. So, we suppose that $\nu(n)<n-1$. Note that ${a_{\nu (n) + 1}} > {a_{\nu (n) + 2}} > \cdots > {a_{n - 1}} > {a_n}$ (otherwise, if  ${a_{\nu (n) + 1}} \le {a_{\nu (n) + 2}}$, then it means that $\nu (n) + 1\in J_n$, but $\nu(n)=\max{J_n}$ which brings a contradiction,  and the other terms are likewise), which implies that the inequality (\ref{23}) holds true.
	
	On the other hand, invoking Lemma \ref{lemma32} and the inequality (\ref{22}), we have for all $n\geq n_0$
	\[0 \le {a_{\nu (n) + 1}} - {a_{v(n)}} \le  - \frac{{{\lambda _{\nu(n)}}\left(1 - {\lambda _{\nu(n)}}\right)}}{{4L^2}}\left(\sum\limits_{i = 1}^m {\| {S_i}{\varphi_0^{\nu(n)}} - {S_{i - 1}}{{\varphi_0^{\nu(n)}}}{\| ^2}}\right)^2  + {\xi _{\nu(n)}},\]
	and, consequently, 
	\[\frac{{{\lambda _{\nu(n)}}\left(1 - {\lambda _{\nu(n)}}\right)}}{{4L^2}}\left(\sum\limits_{i = 1}^m {\| {S_i}{\varphi_0^{\nu(n)}} - {S_{i - 1}}{{\varphi_0^{\nu(n)}}}{\| ^2}}\right)^2\leq{\xi _{\nu(n)}}.\]
	Since $\mathop {\lim }\limits_{n \to \infty } {\xi _{\nu (n)}} = \mathop {\lim }\limits_{n \to \infty } {\xi _n} = 0$, we get
	\[\mathop {\lim }\limits_{n \to \infty } \frac{{{\lambda _{\nu (n)}}\left(1 - {\lambda _{\nu (n)}}\right)}}{{4L^2}}\left(\sum\limits_{i = 1}^m {\| {S_i}{\varphi_0^{\nu (n)}} - {S_{i - 1}}{\varphi_0^{\nu (n)}}{\| ^2}}\right)^2  \leq 0.\]
	Since we know that ${{\lambda _{\nu(n)}}\left(1 - {\lambda _{\nu(n)}}\right)}\geq{\varepsilon^2 }$, it follows 
	\begin{equation}\label{24}
	\mathop {\lim }\limits_{n \to \infty }\|{S_i}{\varphi_0^{\nu (n)}} - {S_{i - 1}}{\varphi_0^{\nu (n)}}\| = 0 \indent     \forall i=1,2,...,m.     
	\end{equation}
	
	Now, let $\{ {\varphi_0^{{\nu(n_k)}}}\} _{k = 1}^\infty  \subseteq \{ {\varphi_0^{\nu(n)}}\} _{n = 1}^\infty$ be a subsequence such that
	\[\limsup\limits_{n\rightarrow\infty}\langle {\varphi_0^{\nu(n)}} - \bar u, - F(\bar u)\rangle  = \mathop {\lim }\limits_{k \to \infty } \langle {\varphi_0^{{\nu(n_k)}}} - \bar u, - F(\bar u)\rangle.\]
	Following the same arguments as in {\bf Case 1}, for a subsequence $\{ {\varphi_0^{{\nu(n_{{k_j}})}}}\} _{j = 1}^\infty$ of  $\{ {\varphi_0^{{\nu{(n_k)}}}}\} _{k = 1}^\infty$ such that 
	$ {\varphi_0^{{\nu(n_{{k_j}})}}}\rightharpoonup z\in\bigcap_{i=1}^m\fix T_i$ (by (\ref{24}) and the DC principle of each $T_i$), we have
	\begin{eqnarray*}\limsup\limits_{n\rightarrow\infty}\langle {\varphi_0^{\nu (n)}} - \bar u, - F(\bar u)\rangle  &=& \mathop {\lim }\limits_{k \to \infty } \langle {\varphi_0^{\nu ({n_k})}} - \bar u, - F(\bar u)\rangle\\
		&=& \mathop {\lim }\limits_{j \to \infty } \langle {\varphi_0^{\nu ({n_{{k_j}}})}} - \bar u, - F(\bar u)\rangle  = \langle z - \bar u, - F(\bar u)\rangle  \le 0,
	\end{eqnarray*}
	and also obtain that
	\begin{equation}\label{25}
	\limsup\limits_{n\rightarrow\infty}\delta_{\nu(n)}\le 0.
	\end{equation}
	Again, by using Lemma \ref{lemma33}, we have 
	\[0 \le {a_{\nu (n) + 1}} \le \left( {1 - {\alpha _{\nu (n)}}} \right){a_{\nu (n)}} + {\alpha _{\nu (n)}}{\delta _{\nu (n)}} + v\|u^{\nu (n)}\|,\]
	and then
	\begin{eqnarray}
	0\leq	{a_{\nu (n) + 1}} - {a_{\nu (n)}} &\le& {\alpha _{\nu (n)}}\left( {{\delta _{\nu (n)}} - {a_{\nu (n)}}} \right) + v\|u^{\nu (n)}\| \nonumber\\
	&=&\tau {\beta _{\nu (n) }}\left( {{\delta _{\nu (n)}} - {a_{\nu (n)}}} \right) + v\|u^{\nu (n)}\|\nonumber\\
	&\le& \tau \left( {{\delta _{\nu (n)}} - {a_{\nu (n)}}} \right) + v\|u^{\nu (n)}\|.\nonumber
	\end{eqnarray}
	The fact that the constant $\tau>0$ yields
	\[0\le{a_{\nu (n)}} \le {\delta _{\nu (n)}} + \frac{{{ v\|u^{\nu (n)}\|}}}{\tau }.\]
	Note that $\mathop {\lim }\limits_{n \to \infty } { v\|u^{\nu (n)}\|}=0$ and by utilizing this together with (\ref{25}), we obtain 
	\[0\le\limsup\limits_{n\rightarrow\infty}{a_{\nu (n)}} \le \limsup\limits_{n\rightarrow\infty}{\delta _{\nu (n)}} +\lim\limits_{n\rightarrow\infty} \frac{{{ v\|u^{\nu (n)}\|}}}{\tau } \le 0,\]
	and, this implies that
	\[\mathop {\lim }\limits_{n \to \infty } {a_{\nu (n)}} = 0 \text{     and      } \mathop {\lim }\limits_{n \to \infty } \left( {{a_{\nu (n) + 1}} - {a_{\nu (n)}}} \right) = 0.\]
	As we have shown that ${a_n} \le {a_{\nu (n) + 1}}$, we note that
	\[0 \le \limsup\limits_{n\rightarrow\infty}{a_n} \le \limsup\limits_{n\rightarrow\infty}{a_{\nu (n) + 1}} =\limsup\limits_{n\rightarrow\infty} \left[ {\left( {{a_{\nu (n) + 1}} - {a_{\nu (n)}}} \right) + {a_{\nu (n)}}} \right] = 0,\]
	and, consequently, 
	$\mathop {\lim }\limits_{n \to \infty } {a_n}  = 0$.
	Therefore, we can conclude that $\mathop {\lim }\limits_{n \to \infty }\|{x^n}-\bar u\|=0$, which completes the proof.		
\end{proof}

\vskip0.5cm
\begin{remark} Some useful remarks are in order:
	\begin{itemize}
		\item[(i)]  Let us take a look Algorithm \ref{algorithm} when the operator $F$ is identically zero. Notice that it is related to \cite[Algorithm 1.2]{C04} and \cite[Iterative scheme (3.17)]{CY15} for solving the common fixed point problem (\ref{cfp}). According to the absence of $F$, the operator $T_i, i=1,\ldots,m$, considered in \cite[Theorem 3.5]{CY15} can be relaxed to be in the class of averaged nonexpansive operators, whereas in our work we need the use of Proposition \ref{prop-cegie} so that the firm nonexpansivity of $T_i$ must be assumed here. To discuss Theorem \ref{main-thm} with these previous results,  we derive in Theorem \ref{main-thm} the strong convergence of the generated sequence to the unique solution to the variational inequality over the common fixed point sets, however the results in \cite{C04} and \cite{CY15} are  weak convergences of the sequences provided that every weak cluster point of their generated sequences is in the intersection of fixed point sets. To obtain strong convergence, the nonemptiness of interior of the common fixed point set need to be imposed in their works.
		\item[(ii)]  Algorithm \ref{algorithm} is closely related to the relaxed hybrid steepest descent method in \cite{ZAW06} in the sense that the added information terms $e_i^n, i=1,\ldots,m$, are absent. One can see that  Algorithm \ref{algorithm} reduces to
		$$x^{n+1}=(1- {\lambda _n}) {x^n} + \lambda _nT\left(x^n - \mu\beta _{n }F(x^n)\right)$$
		where the nonexpansive operator $T$ is defined by $T:=T_mT_{m-1}\cdots T_2T_1$,  and the convergence results can be followed the proving  lines in \cite[Theorem 3,1]{ZAW06} with the additional assumption $\lim_{n\to\infty}\frac{\beta_n}{\beta _{n+1}}=1$.  
	\end{itemize}
\end{remark}

\section{Conclusion}

This paper discussed the variational inequality problem over the intersection of fixed point sets of firmly nonexpansive operators. To solve the problem, we derived the so-called sequential constraints method based on iterative technique of the celebrated hybrid steepest descent method and presented its convergence analysis.

	\section*{Acknowledgement}
 Mootta Prangprakhon was partially supported by  Science Achievement Scholarship of Thailand  (SAST), and Faculty of Science, Khon Kaen University. The work of Nimit Nimana and Narin Petrot was supported by the Thailand Research Fund under the Project RAP61K0012.

\end{document}